\documentclass[twocolumn]{autart}    
\usepackage{amsmath}
\usepackage{amssymb}
\usepackage[usenames, dvipsnames]{color}
\usepackage{psfrag}

\newtheorem{theorem}{Theorem}

\newtheorem{lemma}[theorem]{Lemma}
\newtheorem{corollary}[theorem]{Corollary}
\newtheorem{assumption}{Assumption}
\newtheorem{remark}{Remark}

\newtheorem{definition}{Definition}

\newcommand{\tr}[1]{#1^{\sf T}} 

\DeclareMathOperator*{\argmin}{arg\,min}
\DeclareMathOperator*{\argmax}{arg\,max}
\DeclareMathOperator{\bd}{bd}

\usepackage{graphicx}       
\definecolor{dblue}{rgb}{ 0.00, 0.00, 0.60 }

\sloppy

\begin{document}

\begin{frontmatter}

\title{Robust MPC via Min-Max Differential Inequalities\thanksref{footnoteinfo}}

\thanks[footnoteinfo]{This paper was not presented at any IFAC meeting. Corresponding author: B. Chachuat}
\author[ICL]{Mario E. Villanueva}\ead{mario.villanueva10@ic.ac.uk},
\author[IMTEK,OPTEC]{Rien Quirynen}\ead{rien.quirynen@esat.kuleuven.be},
\author[IMTEK]{Moritz Diehl}\ead{moritz.diehl@imtek.uni-freiburg.de},
\author[ICL]{Beno\^it Chachuat}\ead{b.chachuat@ic.ac.uk},
\author[STECH]{Boris Houska}\ead{borish@shanghaitech.edu.cn},
\address[ICL]{Centre for Process Systems Engineering, Department of Chemical Engineering, Imperial College London, South Kensington Campus,London SW7 2AZ, UK. }
\address[IMTEK]{Department of Microsystems Engineering (IMTEK) and Department of Mathematics, 
University of Freiburg, Georges-Koehler-Allee 102, 79110 Freiburg, Germany.}
\address[OPTEC]{Electrical Engineering Department, KU Leuven, Kasteelpark Arenberg 10, 3001 Leuven, Belgium.}
\address[STECH]{School of Information Science and Technology, ShanghaiTech University, 319 Yueyang Road, Shanghai 200031, China.}
\date{DRAFT- ACCEPTED}

\begin{keyword}
model predictive control; robust control; tube-based control; robust forward invariant tube; differential inequalities 
\end{keyword}
\begin{abstract}                         
This paper is concerned with tube-based model predictive control (MPC) for both linear and nonlinear, input-affine continuous-time dynamic systems that are affected by time-varying disturbances. We derive a min-max differential inequality describing the support function of positive robust forward invariant tubes, which can be used to construct a variety of tube-based model predictive controllers. These constructions are conservative, but computationally tractable and their complexity scales linearly with the length of the prediction horizon. In contrast to many existing tube-based MPC implementations, the proposed framework does not involve discretizing the control policy and, therefore, the conservatism of the predicted tube depends solely on the accuracy of the set parameterization. The proposed approach is then used to construct a robust MPC scheme based on tubes with ellipsoidal cross-sections. This ellipsoidal MPC scheme is based on solving an optimal control problem under linear matrix inequality constraints. We illustrate these results with the numerical case study of a spring-mass-damper system. 
\end{abstract}

\end{frontmatter}

\section{Introduction}
\label{sec::intro}

Model predictive control (MPC) refers to a class of feedback controllers, which proceed by solving, at each time step, an optimal control problem predicting the future behavior of a dynamic system on a finite, receding time-horizon, using the current state estimate as initial condition~\cite{Rawlings2009b}. The predicted optimal control trajectory is applied to the actual system until the next measurement becomes available, and the process is then repeated. The implementation of such controllers is based on a certainty-equivalence principle, whereby the future of the system is optimized as if neither external disturbances nor model mismatch were present, despite the fact that such disturbances and mismatch are the reason why feedback is needed in the first place.

The main advantage of certainty-equivalence in MPC is that the resulting optimization problems can often be solved efficiently, in real time~\cite{Diehl2009,Houska2011}. This approach works well in many practical applications, and it often exhibits a certain robustness due to its inherent ability to reject disturbances~\cite{Pannocchia2011,Yu2014}. However, the constraints may become violated when large disturbances occur, since uncertainty is not taken into account in optimizing the predicted state trajectories. In such cases, robust MPC schemes can be used to mitigate these optimistic, certainty-equivalence-based predictions~\cite{Rawlings2009b}. Nonetheless, a rigorous formulation of robust MPC calls for the solution, at each sampling time, of an optimization problem whose decision variables are the future control policies, that is, functions mapping the state measurements onto the control actions. Such optimization problems are hard to solve in general, and brute-force approximations, e.g. based on scenario trees~\cite{Dadhe2008,Engell2009}, can currently only be used for very short time-horizons. Because scenario-tree approaches scale exponentially with the length of the time-horizon, they may even be worse than robust dynamic programming approaches~\cite{Bertsekas2007,Diehl2004a,Rawlings2009b}, which scale linearly with the length of the prediction horizon, yet exponentially with the state dimension.

Convex formulations of robust MPC have been derived for certain classes of problems, for instance when the dynamic system is jointly affine in the state, control and uncertainty and the feedback control law is itself affine in the disturbance~\cite{Goulart2006,Goulart2007}. There, the number of the (matrix-valued) optimization variables scales quadratically with the length of the prediction horizon. The conservatism introduced by an affine parameterization of the control law is discussed in~\cite{VanParys2013}. In this context, we also refer to~\cite{Zeilinger2014}, where real-time variants of robust MPC based on certain affine feedback laws are analyzed. Other convex formulations can be obtained by reformulating the semi-infinite constraints arising in robust MPC as linear matrix inequalities (LMIs). One such LMI reformulation for bounding the worst-case performance of linear systems under additive bounded uncertainty using constant state-feedback control laws was derived in \cite{Kothare1996}. Another approach was presented in~\cite{Li2010}, where the future model variations are bounded by a family of polytopes expressed as LMI constraints.

Other state-of-the-art approaches in robust MPC adopt a set-theoretic perspective. These methods find their origins in viability theory~\cite{Aubin1991,Kurzhanski1993,Kurzhanski1997} or, more specifically, in set-theoretic methods for control~\cite{Blanchini1999,Blanchini2008}. Robust MPC schemes based on these parametric set-propagation methods are also known collectively under the name tube-based MPC. There, the predicted trajectory is replaced by a robust forward invariant tube (RFIT) in the state-space, namely a tube that encloses all possible state trajectories under a given feedback control law, which is independent of the uncertainty realization~\cite{Langson2004}. Tube-based approaches are typically analyzed under the assumption that exact state measurements are available~\cite{Rakovic2005}, or that the equations of a parameterized state estimator, e.g. a linear filter, can be added to the system dynamics so that standard tube-based methods transfer readily~\cite{Mayne2009}.

A parameterized tube-based MPC formulation for linear discrete-time systems with affine uncertainty has been proposed in~\cite{Rakovic2012a}. This formulation allows for the simultaneous optimization of tubes and control laws that are nonlinear in the state measurements, resulting in a computationally tractable, linear programming (LP) formulation, whose decision variables and constraints scale quadratically with the prediction horizon. A generalization handling more general cost functions is considered in~\cite{Rakovic2012b}, and a way of reducing the online complexity of this approach to linear complexity via offline computations is further presented in~\cite{Rakovic2012c}. Tube-based methods have also been developed for linear systems with multiplicative uncertainty, for example by using polytopic tubes with quadratic cost, which leads to a quadratic programming (QP) formulation~\cite{Evans2012}. Regarding nonlinear dynamics, a possible tube-based approach involves linearizing the system around a feasible, but suboptimal, trajectory and computing the tube by regarding the linearization errors as additional uncertainty. This idea was used in~\cite{Lee2002} with polytopic tubes and affine feedback laws. A similar approach was developed by~\cite{Cannon2011} in the case of quadratic cost terms and ellipsoidal tubes. A tube-based approach for nonlinear continuous-time systems was proposed in~\cite{Yu2013}, where the feedback control laws are affinely parameterized and computed offline.

This paper presents a novel numerical approach for addressing tube-based MPC problems. In contrast to existing methods which parameterize the control law, our approach introduces a min-max differential inequality exploiting the properties on the boundary of RFITs. These min-max differential inequalities yield a non-trivial generalization of differential inequalities~\cite{Lakshmikantham1969,Walter1970,Scott2013,Villanueva2014} and provide sufficient conditions for a time-varying convex-set-valued function to be a RFIT for a class of continuous-time nonlinear control systems. We show that these (on the first view) rather abstract concepts can be used to derive practical implementations of tube-based MPC, which i) scale linearly with the length of the prediction horizon, and ii) do not rely on a particular parameterization of the control law. In principle, this approach can achieve arbitrary precision, insofar as the tubes are represented with sufficient accuracy. 

The rest of the paper is organized as follows. The problem formulation is described in Sect.~\ref{sec::RPMC}. The main theoretical framework for characterizing RFITs for nonlinear input-affine systems is developed in Sect.~\ref{sec::minmaxDI} and its application to RFITs with ellipsoidal cross-sections is presented in Sect.~\ref{sec::E-RFITs}. A practical implementation of tube-based MPC based on these results is discussed in Sect.~\ref{sec::minmaxMPC} and illustrated with a numerical case study in Sect.~\ref{sec::numerics}. Finally, Sect.~\ref{sec::conclusion} concludes the paper.

\paragraph*{Notation and preliminaries}
The sets of real and positive real numbers are denoted by $\mathbb{R}$ and $\mathbb{R}_{++}$. The sets of compact and compact convex subsets of $\mathbb R^{n}$ are denoted by $\mathbb K^{n}$ and $\mathbb K^{n}_{C}$, respectively. The support function $V[Z]:\mathbb R^{n} \to \mathbb R$ of a set $Z \in \mathbb K^{n}_{C}$ is defined as
\begin{equation*}
\forall c\in \mathbb R^{n},\quad V[Z](c):=\max_{z} \{ \tr{c} z | z \in Z \}\,.
\end{equation*}
Moreover, $\bd Z$ denotes the boundary of $Z$ and $\Pi(Z)$ its power set. The Hausdorff distance between $W,Z \in \mathbb K^n_C$ is given by
\begin{equation}
\label{eq::hausdorffdistdef}
\begin{aligned}
d_\mathrm{H}(W,Z) := \max &\left \{\;\max_{w \in W} \min_{z \in Z}  \left\Vert w - z \right\Vert_2 \right. \,,\\
&\phantom{\big\{\;}\left.\max_{z \in Z} \min_{w \in W} \left\Vert w - z \right\Vert_2 \,\phantom{,} \right\} \,.
\end{aligned}
\end{equation}
A set $Z\in\mathbb K^{n}_{C}$ is said to be strictly convex if each of its supporting hyperplanes meets $\bd Z$ at exactly one point $z\in \bd Z$, and it is called a smooth set if $\bd Z$ is itself a smooth submanifold of $\mathbb{R}^{n}$. Moreover, there exists a $C^\infty$-smooth convex function $g:\mathbb{R}^{n}\to \mathbb{R}$ such that
\begin{align}
\bd Z \: := \: \{ z\in\mathbb{R}^{n} \mid g(z) = 0 \},
\label{eq:bdZ_g}
\end{align}
as discussed in~\cite{Azagra2002}.

Let $\mathcal S^{n-1}$ denote the unit sphere in $\mathbb{R}^{n}$.
Given a smooth set $Z\in\mathbb{K}^n_C$, the Gauss map $\mathcal{G}_{Z}: \bd Z \to \mathcal S^{n-1}$ is a continuous function assigning to every boundary point $z\in\bd Z$ its unique outward normal. It is defined as 
\begin{equation*}
\forall \zeta\in \bd Z, \quad \mathcal{G}_{Z}(\zeta) := \left\Vert\frac{\partial g}{\partial \zeta}(\zeta)\right\Vert^{-1} \frac{\partial g}{\partial \zeta}(\zeta) \;,
\end{equation*}
for any continuously-differentiable and convex function $g$ satisfying \eqref{eq:bdZ_g}. The differential $\partial \mathcal{G}_{Z}/\partial \zeta(\zeta)$ defines a linear operator from $T_{\zeta}Z$, the tangent space of $\bd Z$ at $\zeta$, onto itself. The set $Z$ is said to have positive curvature at $\zeta\in \bd Z$ if
\begin{equation*}
\forall w\in T_{\zeta}Z\setminus\{0\}, \quad \tr{w}\,\frac{\partial \mathcal{G}_{Z}}{\partial \zeta}(\zeta)\,w > 0\,.
\end{equation*}
In particular, any smooth set $Z$ with positive curvature everywhere is also strictly convex. Moreover, if $Z$ is both smooth and strictly convex, then $\mathcal{G}_{Z}$ has a continuous inverse $\mathcal{G}^{-1}_{Z}$, called the inverse Gauss map. In other words, $\bd Z$ is homeomorphic to $\mathcal S^{n-1}$ through $\mathcal{G}_{Z}$.

The set of $n$-dimensional Lebesgue-integrable functions on the interval $I\subseteq\mathbb R$ is denoted by $\mathbb{L}(I)^{n}$, or simply $\mathbb{L}^n$ if $I=\mathbb{R}$. Unless otherwise stated, Lebesgue integration is understood with respect to the time variable. The abbreviation $\operatorname{a.e.}$ is used to indicate that a property holds almost everywhere.

The sets of $n\times n$ symmetric positive semi-definite and symmetric positive definite matrices are denoted by $\mathbb S^{n}_{+}$ and $\mathbb S^{n}_{++}$, respectively. Ellipsoids in $\mathbb R^{n}$ with center $q\in\mathbb R^{n}$ and positive semi-definite shape matrix $Q\in\mathbb S_+^{n}$ are defined as
\begin{equation*}
\mathcal E(q,Q) := \left\{ q + Q^\frac{1}{2} v \mid \tr{v}v \leq 1  \right\} \,,
\end{equation*}
with $Q^\frac{1}{2}$ being the symmetric square-root of $Q$. By a small abuse of notation, $\mathcal{E}(Q)$ denotes the ellipsoid with shape matrix $Q$ and centered at zero. The (Moore-Penrose) pseudoinverse of a matrix $A\in\mathbb{R}^{m\times n}$ is denoted by $A^{\dagger}$, and its Frobenius norm by $\|A\|_F := \sqrt{\operatorname{Tr}(\tr{A}A)}$.

\section{Problem Formulation}
\label{sec::RPMC}

Consider a nonlinear control system in the form:
\begin{align}
\label{eq::ODE}
\dot x(t) & = \ f(x(t),w(t)) + G(x(t))u(t)\\ 
&=: \: g(x(t),u(t),w(t)) \, ,\nonumber
\end{align}
where $f: \mathbb R^{n_x}\times \mathbb R^{n_w}\to\mathbb R^{n_x}$,
$G:\mathbb R^{n_x}\to\mathbb R^{n_x\times n_u}$ and $g:\mathbb R^{n_x}\times\mathbb R^{n_u}\times\mathbb R^{n_w}\to\mathbb R^{n_x}$ are potentially nonlinear functions, for which regularity assumptions will be stated later on, as necessary; the state trajectory is denoted by $x \in \mathbb{L}^{n_x}$; $u \in \mathbb U := \left\{ u \in \mathbb{L}^{n_u} \mid \forall t \in \mathbb R, \; u(t) \in U \subseteq \mathbb R^{n_u} \right\}$ denotes the
control; and $w\in\mathbb W := \{ w \in \mathbb{L}^{n_w} \mid \forall t \in \mathbb R, \; w(t) \in W \subseteq\mathbb R^{n_w}\}$ denotes the exogenous disturbance. 

The class of nonlinear control systems~\eqref{eq::ODE} is affine in the control function $u$. Although the reasons for this assumption will become apparent later on, it is important to note that it is not as restrictive as it may seem. In engineering practice, many physical systems possess such an affine control structure. Moreover, any nonlinear controlled system may be reformulated into the desired form under a stronger assumption on $u$~\cite{Houska2014}; for instance, under the assumption that $u$ is at least locally Lipschitz continuous, an integrable control $v$ can be introduced such that $u$ is now regarded as a auxiliary state satisfying the differential equation $\dot{u}(t) = v(t)$.

\begin{assumption}
\label{ass::compact}
The sets $U \subseteq \mathbb R^{n_u}$ and $W \subseteq \mathbb R^{n_w}$ are compact and convex, i.e. $U\in\mathbb{K}^{n_u}_{C}$ and 
$W\in\mathbb{K}^{n_w}_{C}$. Furthermore, $U$ has a non-empty interior.
\end{assumption}

\begin{definition}
\label{def::RFIT}
The set-valued function $Y: [t_1,t_2] \to \Pi(\mathbb R^{n_x})$ is called a RFIT for \eqref{eq::ODE} on $[t_1,t_2]$, if there exists an integrable feedback control law $\mu: [t_1,t_2] \times \mathbb R^{n_x} \to U$ such that any solution of the controlled system
\begin{align*}
\forall t \in [t_1,t_2], \quad \dot x(t) = f(x(t),w(t)) + G(x(t))\mu(t,x(t))\,,
\end{align*}
with $x(t) \in Y(t)$, satisfies $x(t') \in Y(t')$ for all $t,t' \in [t_1,t_2]$ with $t' \geq t$ and all $w\in \mathbb W$.
\end{definition}

Our focus throughout the paper is on a tube-based robust MPC approach, whereby the following optimization problems are solved in a receding horizon manner:
\begin{equation}
\label{eq::tubeMPC}
\begin{aligned}
\inf_{Y\in\mathcal Y}\ & \int_t^{t+T} \ell(Y(\tau)) \, \mathrm{d}\tau\\
\text{s.t.}\quad & \forall \tau\in[t,t+T], \quad Y(\tau) \subseteq F_x \\
& Y(t) = \{ \hat x_t \} \,,
\end{aligned}
\end{equation}
where $\mathcal Y$ denotes the set of all RFITs for \eqref{eq::ODE} on $[t,t+T]$; $\ell: \Pi(\mathbb R^{n_x}) \to \mathbb R$ is the objective of the MPC controller; the feasibility set $F_x$ is a subset of $\mathbb R^{n_x}$; and $\hat x_t$ is the state measurement at $t$, assumed to be noise free. 

Observe that optimizing over the tube $Y$ in problem~\eqref{eq::tubeMPC} is equivalent to optimizing over a feedback control policy $\mu$, since every $Y$ is generated by at least one $\mu$ according to Definition~\ref{def::RFIT}. This also makes the link with standard MPC formulations, where the optimization is over the (open-loop) control trajectory.

The following analysis aims to develop a tractable computational approach to addressing the tube-based robust MPC problem~\eqref{eq::tubeMPC}. For simplicity, computational delays are not taken into account in this analysis. Specifically, given any feedback control policy $\mu(t,x)$ keeping the response $x$ in an optimal RFIT $Y^*$---e.g., as found from the repeated solution of \eqref{eq::tubeMPC} in a receding horizon manner---we assume that the control $u(t)=\mu(t,\hat x_t)$ is fed back into the system instantaneously.

\section{Characterization of Robust Forward Invariant Tubes}
\label{sec::minmaxDI}

This section presents sufficient conditions for a convex tube to be a RFIT for the nonlinear input-affine control system \eqref{eq::ODE}, under the following generic assumption:

\begin{assumption}
\label{ass::regularityL}
The function $f$ is jointly continuous in $x,w$ and locally Lipschitz-continuous in $x$. Moreover, the function $G$ is continuously differentiable.
\end{assumption}

The derivation builds upon a recent result for computing enclosures of the reachable set of uncertain ODEs \cite{Villanueva2014}. For a given control $u\in\mathbb{U}$ and a given set of initial states $X_1\in\mathbb{K}_C^{n_x}$ at $t_1$, we denote the reachable set of \eqref{eq::ODE} at $t_2>t_1$ as:
\begin{equation*}
X(t_2) := \left\{\xi\in\mathbb{R}^{n_x} \middle|
\begin{aligned}
& \exists x\in \mathbb{L}^{n_x}, \exists w\in\mathbb{W}:\\
& \operatorname{a.e.}\ t\in[t_1,t_2],\\
& \ \ \dot x(t)= g(x(t),u(t),w(t))\\
& x(t_1) \in X_1\,,\ \ x(t_2) = \xi
\end{aligned}
\right\}\, .
\end{equation*}
For notational convenience, we also define the set-valued function $\Gamma_{g}: \mathbb R^{n_u}\times\mathbb R^{n_x}\times\mathbb K^{n_x}_C \to \mathbb K^{n_x}$ associated with the right-hand-side function $g$ in \eqref{eq::ODE} as:
\begin{equation*}
\Gamma_{g}(\nu,c,Z) :=  
\left\{ g(\xi,\nu,\omega) \,\middle|\,
\begin{aligned}
\tr{c}\xi &= V[Z](c) \\
\xi &\in Z \\
\omega &\in W
\end{aligned}
\right\}\, .
\end{equation*}
The following theorem is adapted from~\cite[Theorem~3 \& Remark 2]{Villanueva2014} for the class of controlled dynamic systems of interest.

\begin{theorem}
\label{thm::GDI}
Consider the uncertain dynamic system~\eqref{eq::ODE} with initial condition $x(t_1)\in X_1$, with $X_1\in\mathbb K^{n_x}_{C}$, and a given control $u\in\mathbb{U}$, and let Assumptions~\ref{ass::compact} and~\ref{ass::regularityL} hold. Let $Y: [t_1,t_2]\rightarrow \mathbb K_{C}^{n_x}$ be a set-valued function such that
\begin{enumerate}
\item the function $V[Y(\cdot)](c)$ is, for all $c\in\mathbb R^{n_x}$, Lipschitz-continuous on $[t_1,t_2]$, and 
\item the set-valued function $Y$ satisfies, for all $c\in\mathbb R^{n_x}$, the differential inequality
\begin{align*}
& \operatorname{a.e.}\ t\in[t_1,t_2],\\
& \quad \dot{V}[Y(t)](c) \geq V[\Gamma_{g}(u(t),c,Y(t))](c)\\
& \text{with} \ \ V[Y(t_1)](c) \geq V[X_1](c) \; .
\end{align*}
\end{enumerate}
Then, $Y$ is an enclosure of the reachable tube of \eqref{eq::ODE}, i.e. $Y(t)\supseteq X(t)$ for all $t\in[t_1,t_2]$.
\end{theorem}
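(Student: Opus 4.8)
The plan is to reduce the statement to a classical comparison/differential-inequality argument applied to the support functions, and to invoke the cited result~\cite[Theorem~3 \& Remark 2]{Villanueva2014} after checking that its hypotheses are met for the closed-loop right-hand side. First I would fix an arbitrary direction $c\in\mathbb R^{n_x}$ and recall that a compact convex set $X(t)$ is contained in $Y(t)$ if and only if $V[X(t)](c)\le V[Y(t)](c)$ for every $c$; thus it suffices to establish this scalar inequality for each fixed $c$ and all $t\in[t_1,t_2]$. Next I would observe that along the dynamics~\eqref{eq::ODE}, for any admissible trajectory $x$ with disturbance $w\in\mathbb W$ that lies in the reachable tube, the upper right Dini derivative of $t\mapsto \tr{c}x(t)$ is bounded above by $\tr{c}\,g(x(t),u(t),w(t))$, and since $x(t)\in X(t)$ attains the support value only when $\tr{c}x(t)=V[X(t)](c)$, the relevant bound on $\dot V[X(t)](c)$ involves exactly the set $\Gamma_g(u(t),c,X(t))$ evaluated through $V[\cdot](c)$. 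This is the structural observation that makes $\Gamma_g$ the right object: it collects precisely those values of $g$ that can be active on the supporting hyperplane in direction $c$.

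The second step is the comparison argument. Defining $\varphi(t):=V[X(t)](c)$ and $\psi(t):=V[Y(t)](c)$, I would note that $\varphi$ satisfies $\dot\varphi(t)\le V[\Gamma_g(u(t),c,X(t))](c)$ a.e.\ (this is essentially the content of the support-function reachable-set characterization in~\cite{Villanueva2014}), while by hypothesis $\psi$ is Lipschitz and satisfies $\dot\psi(t)\ge V[\Gamma_g(u(t),c,Y(t))](c)$ a.e., with $\psi(t_1)\ge\varphi(t_1)$. Provided $V[\Gamma_g(u(t),c,Z)](c)$ is nondecreasing/quasimonotone in $Z$ with respect to the ordering induced by support functions in direction $c$ — more precisely, that $V[X(t)](c)=V[Y(t)](c)$ at a touching time forces $V[\Gamma_g(u(t),c,X(t))](c)\le V[\Gamma_g(u(t),c,Y(t))](c)$, which follows because $X(t)\subseteq Y(t)$ up to that instant means the supporting set of $X(t)$ in direction $c$ is contained in that of $Y(t)$ — a standard one-sided Gronwall/comparison lemma for absolutely continuous functions gives $\varphi(t)\le\psi(t)$ on $[t_1,t_2]$. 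Since this holds for every $c$, $X(t)\subseteq Y(t)$ throughout, which is the claim.

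The step I expect to be the main obstacle is the rigorous justification that $\varphi(t)=V[X(t)](c)$ is (locally) Lipschitz or at least absolutely continuous and that its a.e.\ derivative is dominated by $V[\Gamma_g(u(t),c,X(t))](c)$, together with the delicate quasimonotonicity needed at the contact times; this is exactly where the regularity in Assumption~\ref{ass::regularityL} (joint continuity and local Lipschitzness of $f$ in $x$, continuous differentiability of $G$) and the compactness in Assumption~\ref{ass::compact} enter, to guarantee existence of closed-loop solutions, boundedness of the reachable tube on the compact interval, and upper semicontinuity of the set-valued map $\Gamma_g$. Since the theorem is explicitly stated as an adaptation of~\cite[Theorem~3 \& Remark 2]{Villanueva2014}, I would ultimately discharge this obstacle by verifying that substituting the fixed control $u\in\mathbb U$ into~\eqref{eq::ODE} yields a right-hand side $g(\cdot,u(t),\cdot)$ of the form covered there — measurable in $t$, continuous in $(x,w)$, locally Lipschitz in $x$, with $W$ compact convex — and then citing that result directly rather than re-deriving the support-function differential inequality from scratch.
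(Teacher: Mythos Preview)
Your proposal is correct and aligns with the paper's treatment: the paper does not prove Theorem~\ref{thm::GDI} at all but simply states it as an adaptation of~\cite[Theorem~3 \& Remark~2]{Villanueva2014}, and your plan ultimately reduces to exactly this---verify that plugging the fixed control $u\in\mathbb U$ into~\eqref{eq::ODE} yields a right-hand side satisfying the regularity hypotheses of that reference, and then cite it. The comparison/Gronwall sketch you give for the support functions is essentially the content of the cited result, so you are not doing anything different from the paper, only unpacking what the citation contains.
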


The following theorem sets the basis for the tube-based MPC methods that are proposed in the paper. Unlike Theorem~\ref{thm::GDI}, the control policy $u$ is not given, but chosen in the set $\mathbb U$ of admissible controllers in order to reduce the cross-section of the tube, while accounting for every possible realization of the exogenous disturbance $w\in \mathbb W$. These sufficient conditions come in the form of a min-max differential inequality (DI), which describes the convex cross-sections of a RFIT in terms of their support functions. 

\begin{theorem}
\label{thm::minmaxDI}
Consider the uncertain dynamic system~\eqref{eq::ODE}, and let Assumptions~\ref{ass::compact} and~\ref{ass::regularityL} hold. Let $Y: [t_1,t_2]\rightarrow \mathbb K_{C}^{n_x}$ be a set-valued function such that
\begin{enumerate}
\item the function $V[Y(\cdot)](c)$ is, for all $c \in \mathbb R^{n_x}$, Lipschitz-continuous on $[t_1,t_2]$, and
\item the set-valued function $Y$ satisfies, for all $c \in \mathbb R^{n_x}$, the differential inequality
\begin{equation}
\begin{aligned}
& \operatorname{a.e.}\ t\in[t_1,t_2],\\
& \quad \dot{V}[Y(t)](c) \geq \min_{\nu\in U} V[\Gamma_{g}(\nu,c,Y(t))](c) \; . \label{eq::minmaxDI-RHS}
\end{aligned}
\end{equation}
\end{enumerate}
Then, $Y$ is a RFIT for all $t\in[t_1,t_2]$.
\end{theorem}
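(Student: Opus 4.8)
The plan is to construct an explicit feedback law $\mu$ witnessing that $Y$ is a RFIT, and then to verify the invariance property of Definition~\ref{def::RFIT} by a Gronwall-type estimate on the squared Euclidean distance of the state to the moving set $Y(t)$. The obstruction compared with Theorem~\ref{thm::GDI} is that the control $\nu$ attaining the minimum on the right-hand side of~\eqref{eq::minmaxDI-RHS} depends on the direction $c$, so that in general no single open-loop control $u\in\mathbb U$ makes the differential inequality of Theorem~\ref{thm::GDI} hold in \emph{all} directions simultaneously. A feedback law circumvents this: at a state $x$ one reads off the outward normal direction in which $x$ is ``most critical''---namely the outward normal of $Y(t)$ at the Euclidean projection of $x$ onto $Y(t)$---and applies the control that is optimal for that particular direction.

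Concretely, for $t\in[t_1,t_2]$ and $c\neq 0$ fix a minimiser
\begin{equation*}
\nu^\star(t,c)\in\argmin_{\nu\in U} V[\Gamma_{g}(\nu,c,Y(t))](c)\,,
\end{equation*}
which exists because $U$ is compact and, owing to the input-affine structure of~\eqref{eq::ODE}, $\nu\mapsto V[\Gamma_{g}(\nu,c,Y(t))](c)$ is convex and continuous; note that this set depends on $c$ only through its direction. For $x\notin Y(t)$ let $p=\mathrm{proj}_{Y(t)}(x)$ denote the Euclidean projection onto the convex set $Y(t)$ and $\hat c=x-p\neq0$; the variational characterisation of the projection gives $\hat c^{\sf T} y\le \hat c^{\sf T} p$ for all $y\in Y(t)$, i.e. $\hat c^{\sf T}p=V[Y(t)](\hat c)$, so $p$ lies on the face of $Y(t)$ in direction $\hat c$. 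Set $\mu(t,x):=\nu^\star(t,\hat c)$ for $x\notin Y(t)$, and for $x\in Y(t)$ take $\mu(t,x):=\nu^\star(t,c)$ for some measurably selected outward normal $c$ of $Y(t)$ at $x$ (an arbitrary element of $U$ if $x\in\operatorname{int}Y(t)$). With $Y(\cdot)$ continuous and the data above Carath\'eodory, standard measurable-selection arguments show that $\mu$ can be chosen measurable, so that the closed loop is well posed in the Carath\'eodory/Filippov sense; this plays no role in the invariance estimate below.

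Now fix $t_0\in[t_1,t_2]$, $w\in\mathbb W$, and an absolutely continuous solution $x(\cdot)$ of the closed loop on $[t_0,t_2]$ with $x(t_0)\in Y(t_0)$. Along it, $x(\cdot)$ and $Y(\cdot)$ remain in a compact set $K$ on which, by Assumption~\ref{ass::regularityL} and a covering argument, $f(\cdot,\omega)$ is Lipschitz uniformly in $\omega\in W$ and $G$ is Lipschitz, with a common constant $L$; put $M:=\max_{\nu\in U}\|\nu\|<\infty$. Consider $h(t):=\tfrac12\, d(x(t),Y(t))^2$ with $d(x,Z):=\min_{z\in Z}\|x-z\|$. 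From the Moreau-envelope representation
\begin{equation*}
h(t)=\sup_{c\in\mathbb R^{n_x}}\big\{\,c^{\sf T}x(t)-V[Y(t)](c)-\tfrac12\|c\|^2\,\big\}\,,
\end{equation*}
whose maximiser is unique (by strict concavity in $c$) and equals $\hat c(t):=x(t)-\mathrm{proj}_{Y(t)}(x(t))$, the function $h$ is Lipschitz on $[t_0,t_2]$ and, by a Danskin-type argument, $\dot h(t)\le \hat c(t)^{\sf T}\dot x(t)-\dot V[Y(t)](\hat c(t))$ for a.e.\ $t$. Writing $x=x(t)$, $p=\mathrm{proj}_{Y(t)}(x(t))$, $\hat c=\hat c(t)$ and $\nu^\star=\mu(t,x)=\nu^\star(t,\hat c)$, so that $\dot x(t)=f(x,w(t))+G(x)\nu^\star$, and using that $p$ lies on the face of $Y(t)$ in direction $\hat c$ and $w(t)\in W$, we have $f(p,w(t))+G(p)\nu^\star\in\Gamma_{g}(\nu^\star,\hat c,Y(t))$, whence by the choice of $\nu^\star$ and~\eqref{eq::minmaxDI-RHS},
\begin{equation*}
\hat c^{\sf T}\big(f(p,w(t))+G(p)\nu^\star\big)\;\le\;\min_{\nu\in U}V[\Gamma_{g}(\nu,\hat c,Y(t))](\hat c)\;\le\;\dot V[Y(t)](\hat c)\,.
\end{equation*}
Subtracting this from the bound on $\dot h(t)$ and using $\|\hat c\|=\|x-p\|=d(x(t),Y(t))$, the local Lipschitz continuity of $f(\cdot,w(t))$ and $G$, and $\|\nu^\star\|\le M$,
\begin{equation*}
\dot h(t)\le \hat c^{\sf T}\big(f(x,w(t))-f(p,w(t))\big)+\hat c^{\sf T}\big(G(x)-G(p)\big)\nu^\star\le L(1+M)\,\|\hat c\|\,\|x-p\|=2L(1+M)\,h(t)\,.
\end{equation*}

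Hence $\dot h(t)\le C\,h(t)$ for a.e.\ $t\in[t_0,t_2]$ with $C:=2L(1+M)$, while $h(t_0)=0$ and $h\ge0$, so Gronwall's inequality yields $h\equiv0$, i.e.\ $x(t)\in Y(t)$ for all $t\in[t_0,t_2]$. Since $t_0$, $w$ and the solution were arbitrary, $Y$ is a RFIT, which proves the theorem. The steps requiring the most care are the measure-theoretic ones: the Danskin argument must be run with $V[Y(\cdot)](c)$ merely Lipschitz (not differentiable) in $t$, and~\eqref{eq::minmaxDI-RHS} must be invoked at the time-varying direction $\hat c(t)$, which forces one to use that the differential inequality, and the differentiability in $t$ of $V[Y(\cdot)](c)$, hold outside a null set independent of $c$; this, together with the continuity of $t\mapsto\hat c(t)$ inherited from continuity of the projection onto $Y(t)$ and the well-posedness of the discontinuous closed loop, is where the technical work concentrates.
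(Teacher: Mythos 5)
Your strategy is genuinely different from the paper's. The paper never estimates a distance function: under auxiliary smoothness and positive-curvature assumptions on $U$ and on the cross-sections $Y(t)$, it builds a \emph{continuous} feedback through the Gauss map, $\mu(t,\xi)=\mu^*_t(\mathcal G_{Y(t)}(\xi))$, notes that \eqref{eq::minmaxDI-RHS} then implies the fixed-control differential inequality of Theorem~\ref{thm::GDI} for the closed-loop right-hand side, and concludes by invoking that enclosure theorem; the auxiliary assumptions are removed afterwards by sandwiching $Y$ and $U$ between smooth, positively curved approximations $Y_\epsilon\supseteq Y$, $U_\epsilon\subseteq U$ and passing to the limit $\epsilon\to 0$ with a Bolzano--Weierstrass selection of the feedback. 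Your projection-based feedback together with a Gronwall bound on $h(t)=\tfrac12\,d(x(t),Y(t))^2$ via the conjugate (Moreau) representation would avoid both the curvature hypotheses and the approximation step, and the parts you actually carry out are sound: $g(p,\nu^\star,w(t))\in\Gamma_g(\nu^\star,\hat c,Y(t))$ because $\hat c^{\sf T}p=V[Y(t)](\hat c)$, the Lipschitz estimate, and the Gronwall closure (including $\dot h=0$ a.e.\ on $\{h=0\}$) are all correct.

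The gap is exactly where you say ``the technical work concentrates'', and it is not closed. Hypothesis (2) gives \eqref{eq::minmaxDI-RHS}, and the a.e.-differentiability of $V[Y(\cdot)](c)$, only for each \emph{fixed} $c$, with an exceptional null set that may depend on $c$; your Danskin step and the subsequent invocation of \eqref{eq::minmaxDI-RHS} are performed at the time-varying direction $\hat c(t)$, and you merely assert that the inequality and the differentiability hold outside a null set independent of $c$. Nothing in the hypotheses provides such a $c$-independent null set (a Lipschitz map $t\mapsto V[Y(t)](\cdot)$ into $C(\mathcal S^{n_x-1})$ need not be differentiable anywhere in the vector-valued sense), so the pivotal chain $\dot h(t)\le \hat c(t)^{\sf T}\dot x(t)-\dot V[Y(t)](\hat c(t))\le 2L(1+M)\,h(t)$ is unproven as written. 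A completion is possible---e.g.\ integrate the differential inequality for the frozen direction $\hat c(s)$ over $[t,s]$, compare the feedback chosen for direction $\hat c(\tau)$ with the minimum for direction $\hat c(s)$, and control the mismatch using continuity of $\tau\mapsto\hat c(\tau)$, which in turn requires Hausdorff continuity of $Y(\cdot)$ extracted from the pointwise-in-$c$ Lipschitz hypothesis by an equi-Lipschitz-in-$c$ argument---but this is precisely the nontrivial content that the paper outsources to Theorem~\ref{thm::GDI} after constructing a regular feedback, and it must be supplied rather than announced. Relatedly, your $\nu^\star(t,\hat c)$ is only a measurable selection, so $\mu$ may be discontinuous in $x$ (non-unique minimizers, e.g.\ polytopic $U$) and Carath\'eodory solutions of the closed loop need not exist, while for Filippov solutions the pointwise inequality can degrade at discontinuity points; Definition~\ref{def::RFIT} quantifies only over solutions that exist, so this is arguably admissible, but note that the paper's smoothness/positive-curvature assumptions and its $\epsilon$-approximation step exist precisely to make $\mu$ continuous before Theorem~\ref{thm::GDI} is applied.
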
 
\begin{proof}
See Appendix~\ref{app::minmaxDI}.
\end{proof}

The following corollary is a direct side-product of the proof of Theorem~\ref{thm::minmaxDI}.

\begin{corollary}
\label{cor::explicitfeedbackY}
Let the set-valued function $Y:[t_1,t_2]\to \mathbb K_{C}^{n_x}$ satisfy the conditions of Theorem~\ref{thm::minmaxDI}. Under the additional regularity conditions that the set of admissible controls $U$ and the tube cross-sections $Y(t)$, for all $t\in[t_1,t_2]$, are smooth and their boundaries have positive curvature everywhere, an explicit feedback control law keeping the uncertain system trajectories within the RFIT is
\begin{subequations}
\label{eq::explicitfeedbackY}
\begin{align}
\mu(t,\xi) &= \mu^{*}_{t}\left(\mathcal{G}_{Y(t)}(\xi)\right)\,,\\
\text{with}\quad \mu^*_t(c) :&= \argmin_{\nu\in U}\tr{c}G\left(\mathcal{G}^{-1}_{Y(t)}(c)\right)\nu\;,
\end{align}
\end{subequations}
\and the inverse Gauss map $\mathcal{G}^{-1}_{Y(t)}$ of $Y(t)$ is given 
by
\begin{equation*}
\mathcal{G}^{-1}_{Y(t)}(c) = \argmax_{\xi\in Y(t)} \tr{c}\xi \;.
\end{equation*}
\end{corollary}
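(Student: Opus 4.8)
The plan is to extract the feedback law \eqref{eq::explicitfeedbackY} directly from the constructive part of the proof of Theorem~\ref{thm::minmaxDI} in Appendix~\ref{app::minmaxDI}, specialized to the regular situation in which $U$ and every cross-section $Y(t)$ are smooth with positively-curved---hence strictly convex---boundaries. The proof of Theorem~\ref{thm::minmaxDI} establishes that $Y$ is a RFIT by exhibiting a feedback $\mu$ whose closed loop, along each direction $c\in\mathbb{R}^{n_x}$, satisfies the support-function hypotheses of Theorem~\ref{thm::GDI}; concretely, at a boundary point $\xi\in\bd Y(t)$ the control value has to realize the minimum on the right-hand side of \eqref{eq::minmaxDI-RHS} for the direction $c$ supporting $Y(t)$ at $\xi$. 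The task is therefore only to put this minimizing control value in closed form.

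First I would simplify the maximization defining $V[\Gamma_{g}(\nu,c,Y(t))](c)$ using strict convexity of $Y(t)$. Since $Y(t)$ is smooth and strictly convex, its supporting hyperplane with outward normal $c$ meets $\bd Y(t)$ at a single point, namely $\argmax_{\xi\in Y(t)}\tr{c}\xi$, which is precisely $\mathcal{G}^{-1}_{Y(t)}(c)$; this proves the stated formula for the inverse Gauss map and shows that the constraint set $\{\xi\in Y(t)\mid\tr{c}\xi=V[Y(t)](c)\}$ collapses to that singleton. Hence
\begin{equation*}
V[\Gamma_{g}(\nu,c,Y(t))](c) = \max_{\omega\in W}\ \tr{c}\Bigl(f\bigl(\mathcal{G}^{-1}_{Y(t)}(c),\omega\bigr) + G\bigl(\mathcal{G}^{-1}_{Y(t)}(c)\bigr)\nu\Bigr)\,.
\end{equation*}

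Next I would exploit the input-affine structure of \eqref{eq::ODE}: in the expression above $\nu$ enters only through $\tr{c}G(\mathcal{G}^{-1}_{Y(t)}(c))\nu$, which is independent of $\omega$, while $f$ does not depend on $\nu$, so the objective is additively separable in $(\nu,\omega)$ and
\begin{equation*}
\min_{\nu\in U} V[\Gamma_{g}(\nu,c,Y(t))](c) = \max_{\omega\in W}\tr{c}f\bigl(\mathcal{G}^{-1}_{Y(t)}(c),\omega\bigr) + \min_{\nu\in U}\tr{c}G\bigl(\mathcal{G}^{-1}_{Y(t)}(c)\bigr)\nu\,,
\end{equation*}
with minimizer $\mu^{*}_{t}(c)=\argmin_{\nu\in U}\tr{c}G(\mathcal{G}^{-1}_{Y(t)}(c))\nu$. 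Converting this direction-indexed control into a genuine state feedback uses that $\mathcal{G}_{Y(t)}$ is a homeomorphism from $\bd Y(t)$ onto $\mathcal S^{n_x-1}$: at $\xi\in\bd Y(t)$ the only direction along which a trajectory can leave $Y(t)$ is the outward normal $\mathcal{G}_{Y(t)}(\xi)$, so the relevant direction is $c=\mathcal{G}_{Y(t)}(\xi)$, giving $\mu(t,\xi)=\mu^{*}_{t}(\mathcal{G}_{Y(t)}(\xi))$; moreover, since $\mathcal{G}_{Y(t)}\circ\mathcal{G}^{-1}_{Y(t)}=\mathrm{id}$, this feedback takes exactly the value $\mu^{*}_{t}(c)$ at the active point $\mathcal{G}^{-1}_{Y(t)}(c)$, so that $V[\Gamma_{g}(\mu(t,\cdot),c,Y(t))](c)$ equals the pointwise minimum in \eqref{eq::minmaxDI-RHS}. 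On the interior of $Y(t)$ the feedback value is immaterial for invariance and can be fixed by any measurable extension.

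The step I expect to require the most care is checking that \eqref{eq::explicitfeedbackY} is a bona fide admissible feedback and that it indeed closes the argument of Theorem~\ref{thm::minmaxDI}. Positive curvature of $\bd U$ makes $U$ strictly convex, so the linear program defining $\mu^{*}_{t}(c)$ has a unique solution depending continuously on $c$ and on $\mathcal{G}^{-1}_{Y(t)}(c)$; positive curvature of $\bd Y(t)$ makes $\mathcal{G}_{Y(t)}$ and $\mathcal{G}^{-1}_{Y(t)}$ continuous; combined with Assumption~\ref{ass::regularityL} this yields continuity of $\xi\mapsto\mu(t,\xi)$ on $\bd Y(t)$ and measurability in $t$, so that $\mu$ meets the requirements of Definition~\ref{def::RFIT}. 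With the identity $V[\Gamma_{g}(\mu(t,\cdot),c,Y(t))](c)=\min_{\nu\in U}V[\Gamma_{g}(\nu,c,Y(t))](c)\leq\dot V[Y(t)](c)$ just noted, the closed loop satisfies the support-function hypotheses used in the proof of Theorem~\ref{thm::minmaxDI}, so forward invariance of $Y$ under \eqref{eq::explicitfeedbackY} follows from that proof, with no conflict between directions because each support-function inequality is governed solely by the normal direction at the corresponding boundary point.
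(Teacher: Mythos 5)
Your proposal is correct and follows essentially the same route as the paper: the corollary is exactly Step~S1 of the proof of Theorem~\ref{thm::minmaxDI}, where strict convexity/positive curvature collapses the supporting set to the singleton $\mathcal{G}^{-1}_{Y(t)}(c)$, the input-affine structure separates the min over $\nu$ from the max over $\omega$ as in Eq.~\eqref{eq::RHS1}, and the resulting argmin feedback $\mu(t,\xi)=\mu^*_t(\mathcal{G}_{Y(t)}(\xi))$ is shown to render $Y$ invariant by applying Theorem~\ref{thm::GDI} to the closed-loop system. The only cosmetic difference is that the paper establishes continuous differentiability of $\mu^*_t$ via sensitivity theory where you argue plain continuity, which does not change the substance of the argument.
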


\begin{remark}
The feedback control law given by Eqs.~\eqref{eq::explicitfeedbackY} is not necessarily unique.
\end{remark}

\begin{remark}
It is clear from Eq.~\eqref{eq::RHS1} in Appendix~\ref{app::minmaxDI}, that the construction of the feedback control law~\eqref{eq::explicitfeedbackY} relies heavily on the assumption of a control-affine structure for $g$ as well as the absence of uncertain inputs $w$ in the matrix-valued function $G$.
\end{remark}

Although heavily inspired by set-theoretic methods for the synthesis of model predictive controllers (see~\cite{Rakovic2009} for an introduction), Theorem~\ref{thm::minmaxDI} also provides a constructive approach for nonlinear feedback control laws by exploiting properties at the boundaries of RFITs. This approach has not been exploited so far in the robust MPC literature. 

Checking the sufficient conditions provided by Theorem~\ref{thm::minmaxDI} for an arbitrary convex set-valued function may prove computationally challenging in general. Nevertheless, the min-max differential inequality can be checked constructively for certain parameterizations of the tube cross-sections, as shown for ellipsoidal tubes next.

\section{Ellipsoidal Robust Forward Invariant Tubes}
\label{sec::E-RFITs}

This section derives computationally tractable conditions for checking whether a particular set-valued function $Y$ is a RFIT for the dynamic system~\eqref{eq::ODE}. The focus is on tubes with ellipsoidal cross-sections, given by
\begin{equation}
\label{eq::elltube}
\begin{aligned}
Y(t) = \mathcal E(q_x(t),Q_x(t))\,,
\end{aligned}
\end{equation}
where $q_x(t)\in\mathbb R^{n_x}$ and $Q_x(t)\in\mathbb S^{n_x}_{+}$ denote the center and shape matrix of the tube, pointwise in time. Moreover, we make the following additional assumptions:

\begin{assumption}
\label{ass::ellsets}
There exist pairs $(q_w,Q_w)\in\mathbb R^{n_w}\times\mathbb S^{n_w}_{+}$ and
$(q_u,Q_u)\in\mathbb R^{n_u}\times\mathbb S^{n_u}_{+}$ such that
$\mathcal E(q_w,Q_w)\supseteq W$ and $\mathcal E(q_u,Q_u) \subseteq U$.
\end{assumption}

\begin{assumption}
\label{ass::regularityC2}
The functions $f$ and $G$ are twice continuously differentiable in all of their arguments. 
\end{assumption}

The following construction of ellipsoidal tubes is based on Theorem~\ref{thm::minmaxDI} and uses the same ideas as the construction of ellipsoidal bounds for uncertain ODEs based on Theorem~\ref{thm::GDI}; see, e.g., \cite{Kurzhanski1993,Houska2012,Villanueva2014}. The control $u$, disturbance $w$ and state $x$ are decomposed into their nominal and perturbed components as
\begin{align*}
u(t) &= q_u + \delta_u(t)\\
w(t) &= q_w + \delta_w(t)\\
x(t) &= q_x(t) + \delta_x(t)\,,\\
\end{align*}
where $q_x$ satisfies the ODE
\begin{align*}
\dot q_x(t) =& f(q_x(t),q_w) + G(q_x(t))\,u_x(t) \,,
\end{align*} 
for a reference control $u_x(t)\in\mathcal{E}(q_{u},Q_{u})$. It follows that the perturbed state component $\delta_x$ satisfies the ODE
\begin{equation}
\label{eq::odedecomposition}
\begin{aligned}
\dot \delta_x(t) = &\phantom{+}A(q_x(t)) \delta_x(t) + B(q_{x}(t)) \delta_w(t)\\
&+ G(q_x(t) + \delta_x(t)) \delta_u(t) \\ 
&+ n(t,\delta_x(t),\delta_w(t),\delta_u(t))\,.
\end{aligned}
\end{equation}
Here, the function $n:\mathbb{R}\times\mathbb R^{n_x}\times \mathbb R^{n_w}\times \mathbb R^{n_u}\to \mathbb R^{n_x}$ is defined in such a way that \eqref{eq::odedecomposition} is equivalent to \eqref{eq::ODE} and 
\begin{align*}
A(q_x(t)) &:= \frac{\partial f}{\partial x} (q_x(t),q_w) + \frac{\partial G}{\partial x}(q_x(t)) u_x(t) \,, \\ 
B(q_x(t)) &:= \frac{\partial f}{\partial w} (q_x(t),q_w) \,.
\end{align*}

A number of remarks are in order. Since the central path $q_{x}$ corresponds to the nominal state, $u_x$ can be understood as the control input that would be applied if no uncertainty were affecting the system. Moreover, the decomposition of the right-hand side per~\eqref{eq::odedecomposition} is valid with any integrable functions $A$ and $B$ of suitable dimensions, as long as $n$ is chosen in an appropriate manner. For instance, if $A$ and $B$ are constructed through a first-order Taylor expansion, $n$ is given by the remainder function per Taylor's theorem.

The present tube construction relies on the existence of an inner approximation of $\mathcal{E}(q_u,Q_u)$ centered at $u_x(t)$, as given by the following lemma.
\begin{lemma}
\label{lem::innercontrol}
For any reference control $u_x(t) \in \mathcal E(q_u,Q_u)$, any function $\gamma: \mathbb R \to (0,1]$, and any matrix-valued function $R_{u}:\mathbb{R}\to\mathbb{S}^{n_{u}}_{+}$ such that $R_u(t) \succeq 0$ and
\begin{equation}
\begin{aligned}
R_{u}(t) &=\hphantom{+} [1-\gamma(t)] Q_{u}  \\ 
&\hphantom{=}+ \left[1-\gamma(t)^{-1}\right] [u_{x}(t)-q_{u}]\tr{[u_{x}(t)-q_{u}]}
\end{aligned}
\end{equation}
we have $\mathcal{E}(u_x(t),R_u(t)) \subseteq \mathcal{E}(q_u,Q_u)$ for all $t \in \mathbb R$.
\end{lemma}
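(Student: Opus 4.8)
The plan is to establish the inclusion $\mathcal{E}(u_x(t),R_u(t)) \subseteq \mathcal{E}(q_u,Q_u)$ via support functions, which is the standard route for ellipsoidal inclusions. Fixing $t$ and writing $d := u_x(t)-q_u$, $\gamma := \gamma(t)$, the claim is equivalent to $V[\mathcal{E}(u_x(t),R_u(t))](c) \le V[\mathcal{E}(q_u,Q_u)](c)$ for all $c\in\mathbb{R}^{n_u}$. Since the support function of $\mathcal{E}(q,Q)$ in direction $c$ is $\tr{c}q + \sqrt{\tr{c}Qc}$, and the center of the inner ellipsoid is $u_x(t)=q_u+d$, after cancelling the common linear term $\tr{c}q_u$ the inequality to prove reduces to
\begin{equation*}
\tr{c}d + \sqrt{\tr{c}R_u(t)c} \;\le\; \sqrt{\tr{c}Q_u c}\,.
\end{equation*}
Substituting the given expression $R_u(t) = (1-\gamma)Q_u + (1-\gamma^{-1})d\tr{d}$ gives $\tr{c}R_u(t)c = (1-\gamma)\tr{c}Q_u c + (1-\gamma^{-1})(\tr{c}d)^2$, so with the shorthands $a := \tr{c}Q_u c \ge 0$ and $s := \tr{c}d$, the target inequality becomes $s + \sqrt{(1-\gamma)a + (1-\gamma^{-1})s^2} \le \sqrt{a}$.

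The key step is then a purely scalar verification. First I would note that the hypothesis $R_u(t)\succeq 0$ guarantees the radicand $(1-\gamma)a + (1-\gamma^{-1})s^2$ is nonnegative, so the left-hand square root is well defined; and since $\gamma\in(0,1]$ we have $1-\gamma\ge 0$ and $1-\gamma^{-1}\le 0$, hence this radicand is at most $(1-\gamma)a \le a$, which already shows $\sqrt{(1-\gamma)a+(1-\gamma^{-1})s^2}\le\sqrt{a}$. If $s\le 0$ the desired inequality follows immediately. For $s>0$, I would isolate the square root, square both sides (legitimate since $\sqrt{a}-s$ — one should first dispatch the case $s\ge\sqrt a$ by observing the radicand argument, or argue directly), and check that the resulting polynomial inequality $(1-\gamma)a + (1-\gamma^{-1})s^2 \le a - 2s\sqrt{a} + s^2$ simplifies to $0 \le \gamma a - 2s\sqrt{a} + \gamma^{-1}s^2 = (\sqrt{\gamma a} - \gamma^{-1/2}s)^2$, which is a perfect square and hence always true. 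This identity is precisely why the particular form of $R_u(t)$ in the statement was chosen (it is the Schur-complement / S-procedure optimum of the one-parameter family of ellipsoid enclosures), so the algebra is expected to close cleanly.

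I anticipate no serious obstacle; the only points requiring care are the bookkeeping of signs when squaring (ensuring $\sqrt{a}-s \ge 0$ whenever we square, which follows from the perfect-square identity itself, or can be handled by the monotone case split on the sign of $s$) and the explicit use of the standing assumption $R_u(t)\succeq 0$ to ensure all radicands are legitimate. Finally, since $t\in\mathbb{R}$ was arbitrary, the inclusion holds for all $t$, completing the proof. As a remark, one may add that the role of $\gamma(t)$ is to trade off the two terms, and the optimal (tightest) inner ellipsoid in this family is obtained at $\gamma(t) = 1 - \|Q_u^{-1/2} d\|^{-1}\,\tr{d}Q_u^{-1}d / \big(1+\dots\big)$ — though this refinement is not needed for the lemma as stated.
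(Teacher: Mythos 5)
Your proof is correct, but it takes a genuinely different route from the paper's. You verify the inclusion directly at the level of support functions, reducing it (with $d:=u_x(t)-q_u$, $a:=\tr{c}Q_uc$, $s:=\tr{c}d$) to the scalar inequality $s+\sqrt{(1-\gamma)a+(1-\gamma^{-1})s^2}\le\sqrt{a}$ and closing it via the perfect-square identity $\gamma a-2s\sqrt{a}+\gamma^{-1}s^2=\left(\sqrt{\gamma a}-s/\sqrt{\gamma}\right)^2$. The paper instead notes that $u_x(t)\in\mathcal{E}\left(q_u,d\tr{d}\right)$, so it suffices to show $\mathcal{E}\left(q_u,d\tr{d}\right)\oplus\mathcal{E}(R_u(t))\subseteq\mathcal{E}(q_u,Q_u)$, and invokes the classical parametric outer-ellipsoid formula for Minkowski sums (Kurzhanski), $Q_u=\gamma(t)^{-1}d\tr{d}+(1-\gamma(t))^{-1}R_u(t)$, which solved for $R_u(t)$ yields exactly the stated formula; the case $\gamma(t)=1$ (where $R_u(t)=0$) is dispatched separately from $u_x(t)\in\mathcal{E}(q_u,Q_u)$. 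Your argument is more elementary and self-contained---it needs no external bounding result and in effect re-derives it direction by direction---while the paper's is shorter given the cited formula and makes transparent where the expression for $R_u(t)$ comes from. One point you flagged should be pinned down: before squaring in the case $s>0$ you need $\sqrt{a}-s\ge 0$. For $\gamma(t)<1$ this indeed follows from your radicand argument, since $\tr{c}R_u(t)c\ge 0$ gives $(1-\gamma)\left(a-s^2/\gamma\right)\ge 0$, hence $s^2\le\gamma a\le a$; for $\gamma(t)=1$ the radicand vanishes and the required $s\le\sqrt{a}$ is precisely the hypothesis $u_x(t)\in\mathcal{E}(q_u,Q_u)$, i.e. $d\in\mathcal{E}(Q_u)$---the only place that hypothesis is needed, just as in the paper's treatment of the trivial case. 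The closing remark about the optimal $\gamma(t)$ is incomplete as written, but it is not needed for the lemma.
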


\begin{proof}
See Appendix~\ref{app::ellinnerapprox}.
\end{proof}

We also introduce the following technical assumptions regarding the control constraint set and the nonlinearities in the functions $G$ and $n$.
\begin{assumption}
\label{ass::nonlinearityn}
There exists a nonlinearity bounder $\Omega_{n}:\mathbb R^{n_x}\times\mathbb S^{n_x}_+\to\mathbb S^{n_x}_+$ for the function $n$ such that
\begin{equation*}
n(t,\xi,\omega,\nu) \in \mathcal E(\Omega_{n}(q_x(t),Q_x(t))) \,,
\end{equation*}
for all $t\in[t_1,t_2]$, all $\xi\in\mathcal E(Q_x(t))$, all $\omega\in\mathcal E(Q_w)$, and all $\nu\in\mathcal E(Q_u)$.
\end{assumption}

\begin{assumption}
\label{ass::nonlinearityG}
There exists a nonlinearity bounder $\Omega_{G}:\mathbb R^{n_x}\times \mathbb S^{n_x}_+\times\mathbb S^{n_u}_{+}\times\mathbb R^{n_x\times n_u}\to\mathbb S^{n_x}_+$ such that
\begin{equation*}
\begin{aligned}
&\Omega_{G}(q_x(t),Q_x(t),R_{u}(t),S_{0})\\
&\succeq  Q_x^\frac{1}{2}(t)S_{0}R_u^\frac{1}{2}(t)\tr{G(\xi)} + G(\xi)R_u^\frac{1}{2}(t)\tr{S_0}Q_x^\frac{1}{2}(t) \\
& - Q_x^\frac{1}{2}(t)S_{0}R_u^\frac{1}{2}(t)\tr{G(q_x(t))} - G(q_x(t))R_u^\frac{1}{2}(t)\tr{S_{0}}Q_x^\frac{1}{2}(t) \,,
\end{aligned}
\end{equation*}
for all $t\in[t_1,t_2]$, all $\xi\in\mathcal E(q_x(t),Q_x(t))$ and all $S_{0}\in\mathbb R^{n_x\times n_u}$ with $S_{0}\tr{S_{0}}\preceq I$, where $R_{u}$ is constructed as in Lemma~\ref{lem::innercontrol} such that $\mathcal{E}(u_x(t),R_u(t)) \subseteq \mathcal{E}(q_u,Q_u)$.
\end{assumption}

Sufficient conditions for a tube with ellipsoidal cross-section to be a RFIT for system \eqref{eq::ODE} are stated in the following theorem. For notational convenience, we introduce the matrix-valued function $\Phi_g:\mathbb R^{n_x}\times\mathbb S^{n_x}_+\times \mathbb R^{n_x\times n_u} \times \mathbb{S}^{n_u}_{+}\times \mathbb R_{++}\times\mathbb R_{++} \to \mathbb S^{n_x}_+$ associated with the right-hand-side function $g$ in \eqref{eq::ODE} as:
\begin{equation*}
\begin{alignedat}{1}
&\Phi_{g}(q_x(t),Q_x(t),S_0,R_{u}(t),\lambda_0,\kappa_0) := A(q_x(t))Q_x(t) \\
& \quad + Q_x(t)\tr{A(q_x(t))} + Q_x^\frac{1}{2}(t)S_0 R_u^\frac{1}{2}(t)\tr{G(q_x(t))}\\
& \quad  + G(q_x(t))R_u^\frac{1}{2}(t)\tr{S_0}Q_x^\frac{1}{2}(t) + \left(\frac{1}{\lambda_0} 
+ \frac{1}{\kappa_0} \right) Q_x(t) \\
& \quad  + \Omega_{G}(q_x(t),Q_x(t),R_u(t),S_0) \\
& \quad  + \lambda_0 B(q_x(t))Q_w\tr{B(q_x(t))} + \kappa_0 \Omega_{n}(q_x(t),Q_x(t)) \, .
\end{alignedat}
\end{equation*} 
\begin{theorem}
\label{thm::elltube}
Consider the uncertain dynamic system~\eqref{eq::ODE}, and let Assumptions~\ref{ass::ellsets}-\ref{ass::nonlinearityG} hold for a given reference control $u_x\in\mathbb U$ and let $R_u$ be constructed as in Lemma~\ref{lem::innercontrol}. If the functions $Q_x:[t_1,t_2]\to\mathbb S^{n_x}_{+}$ and $q_x:[t_1,t_2]\to\mathbb R^{n_x}$ satisfy 
\begin{align}
\dot q_x(t) =\ & f(q_x(t),q_w) + G(q_x(t))u_x(t) \label{eq::elltube_qx}\\
\dot Q_x(t) \succeq\ & \Phi_g(q_x(t),Q_x(t),S(t),R_{u}(t),\lambda(t),\kappa(t))\,,\label{eq::elltube_Qx}
\end{align}
for some functions $\lambda,\kappa:[t_1,t_2]\to\mathbb R_{++}$ and $S:[t_1,t_2]\to\mathbb R^{n_x\times n_u}$ with $S(t)\tr{S(t)} \preceq I$, then $Y(t):=\mathcal E(q_x(t),Q_x(t))$ describes a RFIT for \eqref{eq::ODE} on $[t_1,t_2]$.
\end{theorem}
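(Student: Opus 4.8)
The plan is to verify the two hypotheses of Theorem~\ref{thm::minmaxDI} for the set-valued function $Y(t)=\mathcal{E}(q_x(t),Q_x(t))$, namely Lipschitz continuity of $t\mapsto V[Y(t)](c)$ and the min-max differential inequality \eqref{eq::minmaxDI-RHS}. The first property is immediate: the support function of an ellipsoid is $V[\mathcal{E}(q_x(t),Q_x(t))](c)=\tr{c}q_x(t)+\sqrt{\tr{c}Q_x(t)c}$, which is Lipschitz in $t$ on $[t_1,t_2]$ because $q_x$ and $Q_x$ are (being solutions of an ODE and a differential inequality with continuous right-hand sides). The bulk of the work is the second property, and the strategy is to \emph{not} re-derive it from scratch but to exhibit a specific admissible control value $\nu\in U$ at a.e.\ $t$ for which $\dot{V}[Y(t)](c)\geq V[\Gamma_g(\nu,c,Y(t))](c)$, which suffices since the right-hand side of \eqref{eq::minmaxDI-RHS} is a minimum over $\nu\in U$.

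Concretely, I would fix $c\in\mathbb{R}^{n_x}$, assume without loss of generality $\tr{c}Q_x(t)c>0$ (the degenerate directions can be handled by a limiting/continuity argument or noted to contribute nothing), and compute $\dot{V}[Y(t)](c)=\tr{c}\dot q_x(t)+\frac{\tr{c}\dot Q_x(t)c}{2\sqrt{\tr{c}Q_x(t)c}}$ at a.e.\ $t$. Using \eqref{eq::elltube_qx} for the first term and the matrix inequality \eqref{eq::elltube_Qx}, $\tr{c}\dot Q_x(t)c\geq \tr{c}\Phi_g(\cdot)c$, for the second. On the other side, I would parameterize the supporting point of $Y(t)$ in direction $c$ as $\xi^* = q_x(t)+\frac{Q_x(t)c}{\sqrt{\tr{c}Q_x(t)c}}$ (this is the inverse Gauss map evaluated at $c/\|c\|$, up to scaling), and choose the control perturbation $\delta_u$ to be the boundary point of $\mathcal{E}(u_x(t),R_u(t))$ — hence an admissible control value in $U$ by Lemma~\ref{lem::innercontrol} — that minimizes $\tr{c}G(\xi^*)\delta_u$, i.e. $\delta_u^* = -R_u^{1/2}(t)\tr{S_0}Q_x^{1/2}(t)c / \|\cdots\|$ for an appropriate matrix $S_0$ with $S_0\tr{S_0}\preceq I$ matching the $S(t)$ in the theorem; call the resulting control $\nu=u_x(t)+\delta_u^*$. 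Then $V[\Gamma_g(\nu,c,Y(t))](c)=\max_{\omega\in W}\tr{c}g(\xi^*,\nu,\omega)$, and I would expand $g$ via the decomposition \eqref{eq::odedecomposition}, bounding each contribution: the drift term $A q_x+Gu_x$ reproduces $\tr{c}\dot q_x$; the linear-in-$\delta_x$ term $\tr{c}A(q_x)\delta_x$ with $\delta_x = \xi^*-q_x$ gives $\frac{\tr{c}(AQ_x+Q_xA^{\sf T})c}{2\sqrt{\tr{c}Q_xc}}$ after symmetrization; the disturbance term $\tr{c}B(q_x)\delta_w$ is bounded over $\omega\in W\subseteq\mathcal{E}(q_w,Q_w)$ using a Cauchy–Schwarz/Young inequality with multiplier $\lambda_0$ to produce the $\lambda_0 BQ_wB^{\sf T}$ and $\frac{1}{\lambda_0}Q_x$ terms; the control term $\tr{c}G(\xi^*)\delta_u^*$ together with the $\Omega_G$ bound from Assumption~\ref{ass::nonlinearityG} yields the two $Q_x^{1/2}S_0R_u^{1/2}G^{\sf T}$ cross terms plus the $\Omega_G$ term; and finally $\tr{c}n$ is controlled via Assumption~\ref{ass::nonlinearityn} with multiplier $\kappa_0$, giving $\kappa_0\Omega_n$ and $\frac{1}{\kappa_0}Q_x$. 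Collecting everything shows $V[\Gamma_g(\nu,c,Y(t))](c)\leq \tr{c}\dot q_x(t)+\frac{\tr{c}\Phi_g(\cdot)c}{2\sqrt{\tr{c}Q_x(t)c}}\leq \dot V[Y(t)](c)$, which is exactly \eqref{eq::minmaxDI-RHS} for this $\nu$, hence a fortiori for the minimizing $\nu$.

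The main obstacle, and the step requiring the most care, is the bookkeeping in the expansion of $\max_{\omega\in W}\tr{c}g(\xi^*,u_x+\delta_u^*,\omega)$: one must correctly match the \emph{linear} perturbation terms (which scale like $\tr{c}(\cdot)c/\sqrt{\tr{c}Q_xc}$, i.e. contribute to $\dot V$ at first order) against the \emph{quadratic/cross} terms that get absorbed into $\Phi_g$, and ensure the Young-inequality multipliers $\lambda_0,\kappa_0$ are applied with the right homogeneity so the $\frac{1}{\lambda_0}+\frac{1}{\kappa_0}$ coefficient on $Q_x$ comes out exactly as written. A secondary subtlety is the treatment of the nonlinearity bounders: Assumptions~\ref{ass::nonlinearityn}–\ref{ass::nonlinearityG} are stated in terms of the \emph{centered} ellipsoids $\mathcal{E}(Q_x(t))$, $\mathcal{E}(Q_w)$, $\mathcal{E}(Q_u)$ and with $S_0\tr{S_0}\preceq I$, so one must carry through the change of variables $\delta_x=Q_x^{1/2}(t)\tilde\delta_x$ with $\tr{\tilde\delta_x}\tilde\delta_x\leq 1$ (and similarly for $\delta_w,\delta_u$) consistently, and verify that $\delta_u^*$ indeed lies in $\mathcal{E}(u_x(t),R_u(t))\subseteq U$ via Lemma~\ref{lem::innercontrol}, so that $\nu\in U$ is legitimate. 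Once the admissibility of $\nu$ and the term-by-term matching are in place, the conclusion follows directly by invoking Theorem~\ref{thm::minmaxDI}.
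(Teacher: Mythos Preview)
Your proposal is correct and follows essentially the same route as the paper's proof: verify Lipschitz continuity of $t\mapsto V[\mathcal E(q_x(t),Q_x(t))](c)$, reduce \eqref{eq::minmaxDI-RHS} to a scalar inequality for $\tfrac{1}{2}\tr{c}\dot Q_x(t)c$ via the ellipsoidal support function and the inverse Gauss map, handle the disturbance and nonlinearity contributions with Young-type bounds (your $\lambda_0,\kappa_0$), account for $G(\xi^*)$ versus $G(q_x)$ via Assumption~\ref{ass::nonlinearityG}, and then invoke Theorem~\ref{thm::minmaxDI}. The only presentational difference is that the paper first evaluates the minimum over $\nu\in\mathcal E(R_u(t))$ explicitly (obtaining $-\|R_u^{1/2}\tr{G(\xi^*)}c\|_2\,\|Q_x^{1/2}c\|_2$) and then invokes the identity $\|C_1y\|_2\|C_2y\|_2=\max_{S\tr{S}\preceq I}\tr{y}\tr{C_1}SC_2y$ to pass to the $S$-parameterized form, whereas you select an $S(t)$-dependent $\nu$ directly; both lead to the same inequality. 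Two small points to tidy up: your formula for $\delta_u^*$ is not the minimizer of $\tr{c}G(\xi^*)\delta_u$ but rather the (suboptimal, admissible) choice $\delta_u^*=-R_u^{1/2}\tr{S(t)}Q_x^{1/2}c/\|Q_x^{1/2}c\|_2$, which is precisely what you need---just drop the phrase ``that minimizes''; and the degenerate case $\tr{c}Q_x(t)c=0$ (or $Q_u$ merely positive semidefinite) is handled in the paper by a separate regularization-and-limit step, which your parenthetical comment correctly anticipates.
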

\begin{proof}
See Appendix~\ref{app::elltube}. 
\end{proof}

The following corollary is an immediate consequence of the proofs of Theorem~\ref{thm::minmaxDI} (Step S1) and Theorem~\ref{thm::elltube}. 
\begin{corollary}
\label{cor::explicitfeedbackellipsoid}
Let the set-valued function $Y:[t_1,t_2]\to\mathbb{K}^{n_x}_{C}$ with 
$Y(t):=\mathcal{E}(q_{x}(t),Q_{x}(t))$ satisfy the conditions of 
Theorem~\ref{thm::elltube}. Then, an explicit feedback law associated 
with this RFIT is given by
\begin{equation}
\label{eq::explicitfeedbackellipsoid} 
\mu(t,\xi) = 
\begin{cases}
\mu^{*}_{t}(\mathcal{G}_{Y(t)}(\xi)) & \text{if }\; \xi \in \bd Y(t) \\
u_x(t) &\text{otherwise}
\end{cases}\;,
\end{equation}
where $\mathcal G_{Y(t)}$ and $\mathcal G^{-1}_{Y(t)}$ denote the Gauss map of $\mathcal E(q_x(t),Q_x(t))$ and its inverse respectively, i.e.
\begin{align*}
\mathcal{G}_{Y(t)}(\xi) &= \frac{Q^{\dagger}_x(t)(\xi - q_x(t))}
{\left\Vert Q^{\dagger}_x(t)(\xi - q_x(t))\right\Vert_{2}}\;, \\
\mathcal{G}^{-1}_{Y(t)}(c) &= q_x(t)+ \frac{Q_x(t)c}{\sqrt{\tr{c}Q_{x}(t)c}}\;,
\end{align*}
and $\mu_t^{*}$ is given by
\begin{equation*}
\mu^{*}_{t}(c) = u_x(t) - \frac{R_{u}(t)\tr{G\left(\mathcal{G}^{-1}_{Y(t)}(c)\right)}c}{\left\| R_u^{\frac{1}{2}}(t)\tr{G\left(\mathcal{G}^{-1}_{Y(t)}(c)\right)}c\ \right\|_2}\,.
\end{equation*}
\end{corollary}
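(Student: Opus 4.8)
The plan is to unwind the two constructive arguments named in the statement and then substitute the closed-form expressions that hold for ellipsoidal sets; no new estimate is needed. Recall from Step~S1 in the proof of Theorem~\ref{thm::minmaxDI} that, at a boundary point $\xi\in\bd Y(t)$ with outward normal $c=\mathcal G_{Y(t)}(\xi)$, a minimiser of the right-hand side of \eqref{eq::minmaxDI-RHS} defines a feedback that turns the min-max differential inequality into a subtangentiality condition and hence keeps the response inside $Y(t)$; since $g(\zeta,\nu,\omega)=f(\zeta,\omega)+G(\zeta)\nu$ and, for a smooth strictly convex cross-section, the supporting point selected by $V[\cdot](c)$ is the single point $\mathcal G^{-1}_{Y(t)}(c)$, this minimiser collapses to $\argmin_{\nu}\tr{c}G(\mathcal G^{-1}_{Y(t)}(c))\nu$, which is exactly Corollary~\ref{cor::explicitfeedbackY}. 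For the ellipsoidal tube of Theorem~\ref{thm::elltube}, its proof does not minimise over all of $U$ but over the inner ellipsoid $\mathcal E(u_x(t),R_u(t))\subseteq\mathcal E(q_u,Q_u)\subseteq U$ supplied by Lemma~\ref{lem::innercontrol}; that is the only place the control constraint enters, and it is precisely the step already performed in the proof of Theorem~\ref{thm::elltube}.

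First I would record the Gauss map and its inverse for $\mathcal E(q_x(t),Q_x(t))$. Using the defining function $g(\zeta)=\tr{(\zeta-q_x(t))}Q_x^{\dagger}(t)(\zeta-q_x(t))-1$ on $q_x(t)+\Ima{Q_x(t)}$ and normalising its gradient $2Q_x^{\dagger}(t)(\zeta-q_x(t))$ gives the stated $\mathcal G_{Y(t)}$; the non-degenerate case $Q_x(t)\succ0$ already supplies the smoothness and positive curvature required by Corollary~\ref{cor::explicitfeedbackY}, and the pseudoinverse notation covers the degenerate limits. For the inverse Gauss map, from $\mathcal G^{-1}_{Y(t)}(c)=\argmax_{\xi\in Y(t)}\tr{c}\xi$ I would parametrise $\xi=q_x(t)+Q_x^{\frac12}(t)v$ with $\tr{v}v\le1$, maximise the affine form $\tr{c}q_x(t)+\tr{(Q_x^{\frac12}(t)c)}v$ at $v^{*}=Q_x^{\frac12}(t)c/\|Q_x^{\frac12}(t)c\|_2$, and read off $\mathcal G^{-1}_{Y(t)}(c)=q_x(t)+Q_x(t)c/\sqrt{\tr{c}Q_x(t)c}$. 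These are routine support-function computations.

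Next I would evaluate $\mu^{*}_t(c)=\argmin_{\nu\in\mathcal E(u_x(t),R_u(t))}\tr{c}G(\mathcal G^{-1}_{Y(t)}(c))\nu$ in closed form. Writing $\nu=u_x(t)+R_u^{\frac12}(t)s$ with $\tr{s}s\le1$ makes the objective the affine function $\tr{c}G(\mathcal G^{-1}_{Y(t)}(c))u_x(t)+\tr{a}s$ of $s$, where $a:=R_u^{\frac12}(t)\tr{G(\mathcal G^{-1}_{Y(t)}(c))}c$; its minimum over the unit ball is attained at $s^{*}=-a/\|a\|_2$ (and at any admissible $s$, say $s^{*}=0$, when $a=0$, consistent with the non-uniqueness already noted). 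Substituting back yields the displayed formula for $\mu^{*}_t(c)$. For $\xi$ in the interior of $Y(t)$ the min-max differential inequality imposes nothing on the instantaneous velocity, so any admissible control is acceptable; the choice $u_x(t)$ is admissible since $u_x\in\mathbb U$ and $\mathcal E(q_u,Q_u)\subseteq U$ under Assumption~\ref{ass::ellsets}, and it is the reference control already used in \eqref{eq::elltube_qx}. Since $\mathcal G_{Y(t)}$ and $\mathcal G^{-1}_{Y(t)}$ depend continuously on $(t,\xi)$ on their domains and $u_x,R_u$ are integrable, the switched law \eqref{eq::explicitfeedbackellipsoid} is a well-defined integrable feedback which, on $\bd Y(t)$, realises the minimiser used in the proofs of Theorems~\ref{thm::minmaxDI} and~\ref{thm::elltube}; hence it keeps every response inside $Y$ and is therefore a feedback associated with the RFIT.

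I expect the algebra to be routine; the two points requiring care are: (i) the feedback above is pointwise in the normal direction $c$, whereas \eqref{eq::elltube_Qx} over-bounds the whole family of directions through the single matrix parameter $S(t)$ with $S(t)\tr{S(t)}\preceq I$, so one must invoke---rather than re-derive---the fact, established in the proof of Theorem~\ref{thm::elltube}, that this pointwise minimiser over $\mathcal E(u_x(t),R_u(t))$ is dominated by the matrix condition \eqref{eq::elltube_Qx} and hence realises the min-max differential inequality \eqref{eq::minmaxDI-RHS} along the ellipsoidal tube; and (ii) the well-posedness of the switched law across $\bd Y(t)$ and at the degenerate normals where $\tr{G(\mathcal G^{-1}_{Y(t)}(c))}c=0$, where the minimiser is not unique. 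Once these are handled, the corollary follows by direct substitution into the constructions of Theorems~\ref{thm::minmaxDI} and~\ref{thm::elltube}.
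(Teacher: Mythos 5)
Your proposal is correct and follows essentially the same route as the paper: the closed-form Gauss map/inverse Gauss map and the argmin over the inner ellipsoid $\mathcal{E}(u_x(t),R_u(t))$ are exactly the computations carried out in Step~S1 of the proof of Theorem~\ref{thm::elltube} (Eq.~\eqref{eq::ellmutstar}), with the interior choice $u_x(t)$ and the pseudoinverse form for degenerate $Q_x(t)$ handled as in the paper's Step~S2 limit argument. No substantive difference or gap.
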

\begin{remark}
Another feedback law can be obtained by extending the the domain of the Gauss map of an ellipsoid from $\bd \mathcal{E}(q,Q)$ to $\mathcal{E}(q,Q)\setminus\{q\}$, and replacing the condition $\xi\in\bd Y(t)$ with $\xi\neq q_{x}$ in the feedback law~\eqref{eq::explicitfeedbackellipsoid}.
\end{remark}
Depending on the problem at hand, the required nonlinear bounders in Assumptions~\ref{ass::nonlinearityn} and~\ref{ass::nonlinearityG} may be constructed either symbolically, as proposed in~\cite{Houska2012}, or numerically, e.g. using tools from interval analysis~\cite{Villanueva2014}. A difficulty with the latter approach, however, is that operations performed using usual interval arithmetic are Lipschitz continuous, yet typically nonsmooth. This would impair the use of gradient-based methods for solving the optimization problems. Instead of applying interval analysis directly, Lemma~\ref{lem::frobeniusbound} in Appendix~\ref{app::frobeniusbound} presents a way of constructing a smooth nonlinearity bounder for any twice continuously-differentiable function.

\section{Robust Tube-Based MPC Based on Min-Max Differential Inequalities}
\label{sec::minmaxMPC}

This section discusses how the developments in Sect.~\ref{sec::minmaxDI} and Sect.~\ref{sec::E-RFITs} can be used in the context of robust MPC. Using Theorem~\ref{thm::minmaxDI}, any solution to the following optimization problem turns out to also be a feasible solution to the tube-based MPC problem~\eqref{eq::tubeMPC}, in the case of RFITs with convex cross-sections:
\begin{equation}
\label{eq::tubeMPC_DI}
\begin{alignedat}{3}
&\inf_{Y}\ && \int_t^{t+T} \ell(Y(\tau)){\rm d}\tau \\ 
&\text{s.t.}\quad && \operatorname{a.e. } \tau\in[t,t+T],\ \forall c\in \mathbb R^{n_x},\\
& && \quad \dot{V}[Y(\tau)](c) \geq \min_{\nu\in U}V[\Gamma_{g}(\nu,c,Y(\tau))](c)\\
& &&\forall \tau\in[t,t+T], \quad Y(\tau) \subseteq F_{x}\\
& &&Y(t) = \{ \hat x_t \} \,.
\end{alignedat}
\end{equation}
Notice that~\eqref{eq::tubeMPC_DI} is not a standard optimal control problem, as it embeds semi-infinite differential inequality constraints. However, discretizing this problem leads to a band-structured optimization problem whose complexity scales linearly with respect to the length of the time horizon.

With the results from Theorem~\ref{thm::elltube}, Problem~\eqref{eq::tubeMPC_DI} can be further specialized to the case of tubes with ellipsoidal cross-sections as:
\begin{equation}
\label{eq::tubeMPC_ERFIT}
\begin{alignedat}{3}
&\!\!\!\!\!\!\!\!\inf_{\substack{Q_x,R_u,S,\\q_x,u_x,\gamma,\\\lambda,\kappa}}\ && 
\int_t^{t+T} \ell(\mathcal E(q_x(\tau),Q_x(\tau))){\rm d}\tau \\ 
&\text{s.t.}\quad && \operatorname{a.e.} \tau\in[t,t+T],\\
& && \quad 
\begin{aligned}
\dot q_x(\tau) &= f(q_x(\tau),q_w) + G(q_x(\tau))u_x(\tau),\\
\dot Q_x(\tau) &= \Phi_g(q_x(\tau),Q_x(\tau),S(\tau),R_{u}(\tau),\lambda(\tau),\kappa(\tau)),\\
R_{u}(\tau) &= [1-\gamma(\tau)] Q_{u} \\ & \ + [1-\gamma(\tau)^{-1}] [u_{x}(\tau)-q_{u}]\tr{[u_{x}(\tau)-q_{u}]},\\
\end{aligned}  \\
& &&  q_x(t) = \hat x_t, \quad Q_x(t) = 0, \\
& && \forall \tau\in[t,t+T], \\
& && \quad Q_x(\tau)\succeq 0,\;  R_u(\tau) \succeq 0,\; S(\tau)\tr{S(\tau)}\preceq I, \\
& && \quad  \kappa(\tau)>0, \; \lambda(\tau)>0 ,\; 0<\gamma(\tau)<1, \\
& && \quad \mathcal E(q_x(\tau),Q_x(\tau)) \subseteq F_{x}, \;
u_x(\tau)\in\mathcal{E}(q_u,Q_u)\;. 
\end{alignedat}
\end{equation} 

Observe that~\eqref{eq::tubeMPC_ERFIT} now yields a standard optimal control problem with linear matrix inequality (LMI) constraints. A solution to this problem provides a RFIT in the form $Y(\tau)=\mathcal E(q_x(\tau),Q_x(\tau))$, from which an explicit feedback control law can be derived by applying Corollary~\ref{cor::explicitfeedbackellipsoid}.

A practical implementation of this tube-based MPC scheme calls for the specification of the performance criterion $\ell$ and the feasibility set $F_x$. In the case of tracking control, we may use the so-called generalized rotational inertia of the set $Y(t)$ with respect to a given reference $x_{\rm ref}$~\cite{Houska2015}, defined by:
\begin{equation}
\label{eq::gen-inertia}
\ell(Y(t)) := \frac{\int_{Y(t)} \tr{(x-x_{\rm ref})}D(x-x_{\rm ref}){\rm d}x}{\int_{Y(t)} 1{\rm d}x}\;,
\end{equation}
where $D\in\mathbb S^{n_x}_{++}$ is any weighting matrix. In the ellipsoidal case, $Y(t):=\mathcal{E}(q_{x}(t),Q_x(t))$, we have~\cite[Appendix~C]{Villanueva2016}
\begin{align*}
\ell(\mathcal E(q_x(t),Q_x(t))) =\ & \tr{(q_x(t) - x_{\rm ref})}D(q_x(t) - x_{\rm ref})\\
& + \frac{\operatorname{Tr}(DQ_x(t))}{n_x+2}\,.
\end{align*}
Regarding the feasible set, we may consider linear state constraints of the form
\begin{equation*}
F_x := \left\{ x\in\mathbb R^{n_x} \;\middle|\; \tr{h_i}x\leq \eta_i,\ i=1,
\ldots,n_h \right\}\,,
\end{equation*}
with $h_i\in\mathbb R^{n_x}$ and $\eta_i\in\mathbb R$. In the ellipsoidal case, the feasibility constraint $\mathcal E(q_x(\tau),Q_x(\tau)) \subseteq F_{x}$ can be rewritten as~\cite{Kurzhanski1997}:
\begin{equation*}
\forall \tau\in[t,t+T], \quad \tr{h_i}q_x(\tau) + \sqrt{\tr{h_i}Q_x(\tau) h_i} \leq \eta_i \,.
\end{equation*}
One of the main issues in robust MPC is ensuring recursive feasibility, namely the ability to find, for every possible initial state, a feasible state at every time along the closed-loop trajectory. This requirement can be addressed by adding the following constraint to the optimization problem~\eqref{eq::tubeMPC_ERFIT}:
\begin{equation}
\label{eq::terminalfeasibility}
Y(t+T)  \subseteq Y_{\rm ref}\,,
\end{equation}
where $Y_{\rm ref}\subseteq F_x$ is a robust forward invariant set, i.e. a time-invariant RFIT. If $Y_{\rm ref}$ satisfies Definition~\ref{def::RFIT} on any time interval, then the sets $\{\mu(t+T,x(t+T))|x(t+T)\in Y(t+T) \}\in U$ will remain non-empty by construction, and the MPC procedure discussed previously is indeed recursively feasible. 
This recursive feasibility condition is satisfied, if
\begin{equation*}
\Phi_g(x_{\rm ref},Q_{\rm ref},S_{\rm ref},Q_u,\lambda_{\rm ref},\kappa_{\rm ref}) \preceq 0
\end{equation*}
for some scalar $\lambda_{\rm ref},\kappa_{\rm ref}\in\mathbb R_{++}$ and some matrix $S_{\rm ref}\in\mathbb R^{n_x\times n_u}$ with $S_{\rm ref}\tr{S_{\rm ref}}\preceq I$. For instance, one such matrix $Q_{\rm ref}$ can be found by solving the following optimization problem:
\begin{equation}
\label{eq::ellipsoidalbarFx}
\begin{aligned}
\inf_{\substack{Q_{\rm ref},\lambda_{\rm ref},\\\kappa_{\rm ref},S_{\rm ref}}}\ & \operatorname{Tr}\left(Q_{\rm ref}\right) \\ 
\text{s.t.}\quad & \Phi_{g}(x_{\rm ref},Q_{\rm ref},S_{\rm ref},Q_u,\lambda_{\rm ref},\kappa_{\rm ref}) \preceq 0 \\
& Q_{\rm ref}\in\mathbb S_+^{n_x}, \; \lambda_{\rm ref},\;\kappa_{\rm ref}>0, \; S_{\rm ref}\tr{S_{\rm ref}} \preceq I\,.
\end{aligned}
\end{equation}
The following section presents an application of the ellipsoidal approach of tube-based MPC on a numerical case-study.

\section{Numerical Case Study}
\label{sec::numerics}

We consider a spring-mass-damper system \cite{Rubagotti2009} given by
\begin{equation*}
\label{eq::smd}
\underbrace{\begin{pmatrix}
\dot{x}_1(t) \\
\dot{x}_2(t) 
\end{pmatrix}}_{\displaystyle\dot{x}(t)} =
\underbrace{\begin{pmatrix}
x_2(t) + w_1(t) \\
-\frac{k(x)x_1(t)}{M} - \frac{h_dx_2(t)}{M} + \frac{w_2(t)}{M} 
\end{pmatrix}}_{\displaystyle f(x(t),w(t))}
+ 
\underbrace{\begin{pmatrix}
0 \\
\frac{1}{M} 
\end{pmatrix}}_{\displaystyle G(x(t))}u(t) \; ,
\end{equation*}
where $x_1$ and $x_2$ denote the displacement of the cart with respect to the equilibrium position $\rm [m]$ and its velocity $\rm [m/s]$, respectively; $M$ is the mass of the cart; $k(x):=k_0\exp{(-x_1)}$, the stiffness of the spring; and $h_d$, the damping factor. The values of the parameters are $M=1~{\rm kg}$, $k_0=0.33~{\rm N/m}$ and $h_d=1.1~{\rm Ns/m}$. 

Bounds for the disturbance and the control sets are given by the ellipsoids $\mathcal{E}(Q_w)\in\mathbb{K}^{2}_{C}$ and $\mathcal{E}(Q_u)\in\mathbb{K}_{C}$, with $Q_w = \operatorname{diag}(10^{-2}~{\rm m^2/s^2},\; 0.25~{\rm N^2})$ and $Q_u = 36\ {\rm N^2}$. The length of the prediction horizon is set to $T = 10\ {\rm s}$, and the initial state of the system is $x_{\rm start} = \tr{\left( 0.7~{\rm m}, 0.7~{\rm m/s} \right)}$. 

\begin{figure}[h!]
\centering
\includegraphics[width = 0.9\columnwidth]{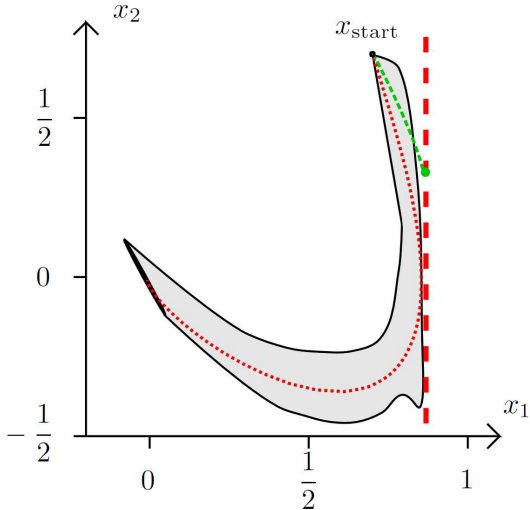}
\caption{\label{fig::rmpc5}Comparison of the robust (ellipsoidal) tube-based controller with a certainty-equivalent model predictive controller. The plot shows the optimal ellipsoidal RFIT (grey area) for $\hat{x}_{t} = x_{\rm start}$ as well as a nominal trajectory (red line) and a disturbed closed-loop trajectory (green line) for the certainty-equivalent controller.}
\end{figure}

The optimization problem in the tube-based MPC controller is based on \eqref{eq::tubeMPC_ERFIT} and involves minimizing the functional
\begin{equation*}
\int_0^T \left(\left\| q_x(t) \right\|_2^2 + \frac{1}{4} \, \mathrm{Tr}\left( Q_x(t) \right) + u_x(t)^2 \,\right) \mathrm{d}t \; .
\end{equation*}
This cost corresponds to the generalized rotational inertia, except for the term $u_x(t)^2$ which can be interpreted as a control regularization. Moreover, a state constraint is enforced, so that $\mathcal E(q_x(\tau),Q_x(\tau)) \subseteq F_{x} := \{ x \mid x_1 \leq 0.85 \}$.

Problem~\eqref{eq::tubeMPC_ERFIT} is solved numerically using the optimal control sofware ACADO~\cite{Houska2011},\footnote{Since ACADO Toolkit does not support LMI constraints, our implementation substitutes the LMI constraints in \eqref{eq::tubeMPC_ERFIT} with equivalent standard (nonlinear) state constraints using Schur complement techniques~\cite{Boyd1994}.} using a piecewise constant control discretization on $40$ equidistant intervals. All the nonlinearity bounders are constructed using the technique in Appendix~\ref{app::frobeniusbound}.

Fig.~\ref{fig::rmpc5} compares the optimal ellipsoidal RFIT (grey area) with closed-loop trajectories for the nominal system (red line) and a system subject to a random disturbance taking values in $\mathcal{E}(Q_w)$ (green line) for a certainty-equivalent MPC controller. The latter minimizes the tracking objective 
\begin{equation*}
\int_{0}^{T}\left(\Vert x(t) \Vert^{2}_{2} + \Vert u(t)\Vert^{2}_{2} \right) \mathrm{d}t\;, 
\end{equation*}
and is implemented in ACADO using the same parameter values and state constraint as the robust tube-based MPC controller above. Notice that in the case where no uncertainty is present, the certainty-equivalent MPC controller performs as expected---although it touches the state constraint, it is able to steer the state to a neighbourhood of the origin without violating it. In contrast, when the system is subject to disturbances this controller fails in about $50\%$ of  the uncertainty scenarios, after causing a constraint violation.

The results of the tube-based MPC controller are shown in Fig.~\ref{fig::rmpc2}. As expected, the controller steers the nominal state (center of the RFIT) close to the origin at $t=10$. In order to prevent violation of the path constraint against all the possible uncertainty scenarios, the controller rotates the ellipsoidal cross-sections of the RFIT quite drastically initially. Moreover, solving a conservative approximation of the min-max differential inequality appears to have a small adverse effect on the controller's performance in this simple case study.

\begin{figure*}
\centering
\begin{minipage}[b]{.49\textwidth}
\includegraphics[width = 0.9\textwidth]{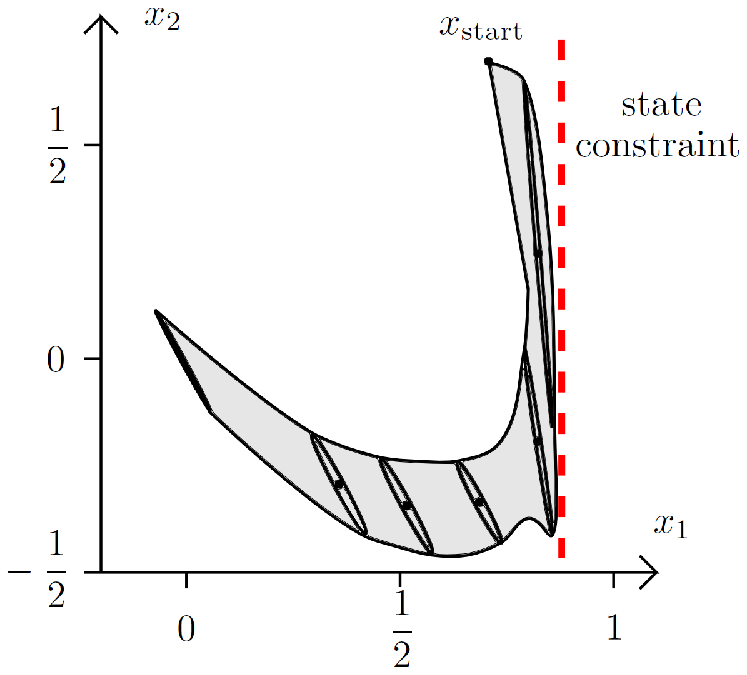}
\end{minipage}%
\begin{minipage}[b]{.49\textwidth}
\includegraphics[width = 0.9\textwidth]{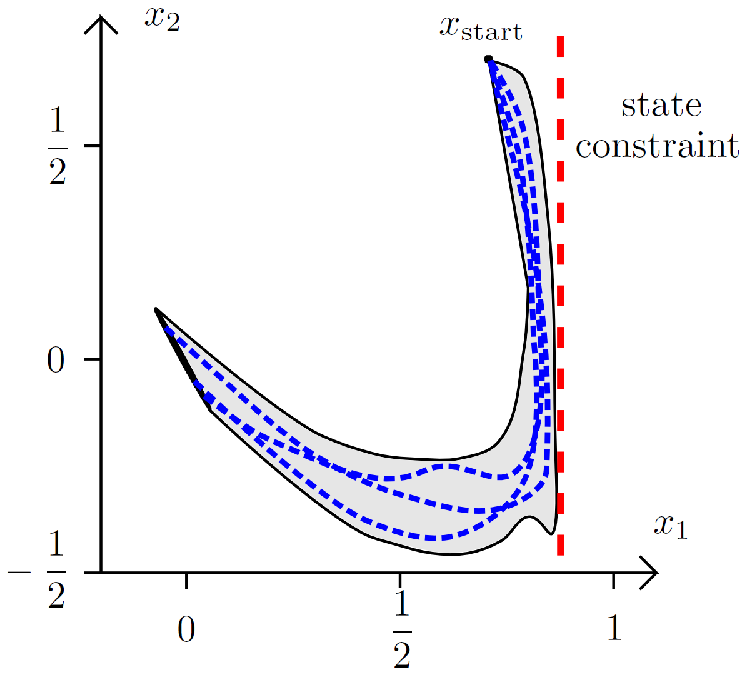}
\end{minipage}
\caption{\label{fig::rmpc2}The optimal ellipsoidal RFIT for $\hat x_t = x_\mathrm{start}$ (grey area). The red line shows the state constraint $\mathbb F_x = \{ x \mid x_1 \leq 0.85 \}$. {\em Left:} Selected ellipsoidal cross-sections for $t \in \{ 1/4, 3/4, 5/4, 7/4, 9/4, 10 \}$. {\em Right:} Selected trajectories for three uncertainty realizations in dotted lines.}
\end{figure*}

\section{Conclusions}
\label{sec::conclusion}

A novel approach to tube-based robust MPC has been proposed for control-affine nonlinear systems, which relies on a min-max differential inequality formulation in order to provide sufficient conditions for a time-varying convex set-valued function to be a RFIT. Unlike other robust MPC approaches, the procedure based on this differential inequality does not call for any particular parameterization of the feedback control law, while benefiting from having linear complexity with respect to the time horizon. Another benefit of the proposed approach is that a semi-explicit representation of a feedback control law may be obtained as a side-product of the RFIT propagation under mild conditions, namely when the RFIT cross-sections and the control sets are smooth with positive curvature. This property has been exploited to devise a practical implementation involving tubes with ellipsoidal cross-sections. This ellipsoidal tube-based MPC approach was tested for a spring-mass-damper system. In contrast to the certainty-equivalent model predictive controller it guarantees feasibility for all uncertainty scenarios.

\small
\begin{ack}                               
This paper is based upon work supported by the Engineering and Physical Sciences Research Council (EPSRC) under Grant EP/J006572/1. Financial support from Marie Curie Career Integration Grant PCIG09-GA-2011-293953 and from the Centre of Process Systems Engineering (CPSE) of Imperial College is gratefully acknowledged. M.E.V. thanks CONACYT for doctoral scholarship. This research was supported by the EU via ERC-HIGHWIND (259 166), FP7-ITN-TEMPO (607 957), and H2020-ITN-AWESCO (642 682). Rien Quirynen holds a research fellowship by the FWO. Support by Freiburg University in form of a guest professorship of the last author at the Freiburg Institute for Advanced Studies (FRIAS) in 2014 is gratefully acknowledged.
\end{ack}
%


\bibliographystyle{plain}        
\bibliography{minmaxDI_ArXiv}

\appendix
\section{Proof of Theorem~\ref{thm::minmaxDI}}
\label{app::minmaxDI}

The proof of Theorem~\ref{thm::minmaxDI} proceeds in two steps. In the first step (S1), we establish the results under the following auxiliary assumptions:
\begin{enumerate}
\renewcommand{\theenumi}{A\arabic{enumi}}
\item \label{ass::Uextra}The set of admissible controls $U$ is smooth with positive curvature;
\item \label{ass::Yextra}The pointwise-in-time cross-sections $Y(t)$ of the tube $Y$ are smooth with positive curvatures at each $t\in[t_1,t_2]$.
\end{enumerate}
In the second step (S2), we argue that the result still holds by removing these extra assumptions. 

\paragraph*{S1}
We start by noting that since the inequality \eqref{eq::minmaxDI-RHS} is invariant under scaling of the directions $c\in\mathbb{R}^{n_x}$, it is sufficient to consider those directions $c$ with $\tr{c}c = 1$, namely $c\in\mathcal{S}^{n_x-1}$.

It follows from Assumption~\ref{ass::Yextra} that the Gauss map $\mathcal{G}_{Y(t)}:\bd Y(t)\to \mathcal{S}^{n_x-1}$ is a diffeomorphism~\cite{Sacksteder1960}. For all $c\in\mathcal{S}^{n_x-1}$ and all $t\in[t_1,t_2]$, the inverse Gauss map values $\mathcal{G}_{Y(t)}^{-1}(c)$ correspond to the elements of the singletons 
\begin{align*}
\Psi_t(c)\ :=&\ \left\{ \xi \in \mathbb R^{n_x} \middle|
\begin{aligned}
\tr{c}\xi &= V[Y(t)](c) \\
 \xi &\in Y(t) 
\end{aligned} \right\} \\
=&\ \argmax_{\xi\in Y(t)} \tr{c}\xi\,.
\end{align*}
In particular, we have
\begin{equation}
\label{eq::RHS1}
\begin{aligned}
\min_{\nu\in U} & \; V[\Gamma_{g}(\nu,c,Y(t))](c)\\ 
& = \max_{\substack{\xi\in\Psi_t(c),\\\omega\in W}} \tr{c}f(\xi,\omega) + \min_{\nu\in U} \max_{\xi\in\Psi_t(c)} \tr{c}G(\xi)\nu\\ 
& =  \max_{\omega\in W} \tr{c}f(\mathcal{G}_{Y(t)}^{-1}(c),\omega) + \min_{\nu\in U} \tr{c}G(\mathcal{G}_{Y(t)}^{-1}(c))\nu\,.
\end{aligned}
\end{equation}
Moreover, by Lipschitz continuity of $V[Y(\cdot)](c)$ on $[t_1,t_2]$, the functions $\mathcal{G}_{Y(\cdot)}^{-1}(c):[t_1,t_2]\to\bd Y(t)$ are continuous for each $c\in\mathcal{S}^{n_x-1}$.

Next, we focus on the minimization subproblem in the right-hand side of \eqref{eq::RHS1}. By continuity of $\mathcal{G}_{Y(t)}^{-1}$ and $G$ (Assumption~\ref{ass::regularityL}) and by compactness of $U$ (Assumption~\ref{ass::compact}), the sets $\argmin_{\nu\in U} \tr{c}G(\mathcal{G}_{Y(t)}^{-1}(c))\nu$ are singletons for all $c\in\mathcal{S}^{n_x-1}$ and all $t\in[t_1,t_2]$, and we can define the function $\mu^*_t$ as
\begin{equation*}
\mu^*_t(c) := \argmin_{\nu\in U} \tr{c}G(\mathcal{G}_{Y(t)}^{-1}(c))\nu \,.
\end{equation*}
Since $\mu^*_t(c)$ is always attained at the boundary of $U$, it follows by Assumption~\ref{ass::Uextra}, by continuous differentiability of $\mathcal{G}_{Y(t)}^{-1}$ and $G$ (Assumption~\ref{ass::regularityL}), and from sensitivity theory~\cite{Fiacco1983} that $\mu^*_t(\cdot)$ is continuously differentiable on $\mathcal{S}^{n_x-1}$, for each $t\in[t_1,t_2]$. Moreover, the function $\mu^*_{(\cdot)}(c):[t_1,t_2]\to U$ is continuous for each $c\in\mathcal{S}^{n_x-1}$.

The result follows from the application of Theorem~3.1 in \cite{Villanueva2014} (see also Theorem~\ref{thm::GDI} herein) to the auxiliary ODE
\begin{equation*}
\dot x(t) = f(x(t),w(t)) + G(x(t))\mu(t,x(t)) \;,
\end{equation*}
with $\mu(t,\xi) := \mu^*_t(\mathcal{G}_{Y(t)}(\xi))$. In particular, $\mu$ provides a feedback control law for the RFIT $Y$ under the auxiliary Assumptions~\ref{ass::Uextra} and~\ref{ass::Yextra}.

\paragraph*{S2}
In the case that certain tube cross-sections $Y(t)$ or the control constraint set $U$ fail to be smooth with positive curvature on $[t_1,t_2]$, we can---due to Assumption~\ref{ass::compact}---always construct a family of set-valued functions $Y_{\epsilon}:[t_1,t_2]\to \mathbb K^{n_x}_{C}$ as well as a family of compact sets $U_{\epsilon} \subseteq U$ with smooth boundary and positive curvature such that the following statements hold for all $\epsilon >0$:
\begin{enumerate}
\item $Y_{\epsilon}(t) \supseteq Y(t)$ for all $t \in [t_1,t_2]$,\quad $U_\epsilon \subseteq U$.
\item There exists a continuous function $\alpha: \mathbb R_+ \to \mathbb R_+$ with $\alpha(0)=0$ such that
\begin{equation*}
\begin{aligned}
&d_\mathrm{H}(Y_\epsilon(t),Y(t)) \leq \alpha(\epsilon), \quad d_\mathrm{H}(U_\epsilon,U) \leq \alpha(\epsilon), \\
&\text{and}\quad \dot{V}[Y_\epsilon(t)](c) \geq \dot{V}[Y(t)](c) + L \alpha(\epsilon)
\end{aligned}
\end{equation*}
for all $t \in [t_1,t_2]$ with $L := \frac{1}{2(t_2-t_1)}$. In particular, $L$ can be made arbitrarily large by choosing $t_2-t_1$ sufficiently small.
\end{enumerate}
The existence of such outer approximations has been proven in \cite[see Lemma~1 and Lemma~2 in the appendix]{Villanueva2014}. This way, the result follows from the application of the procedure in S1 above and taking the limit as $\epsilon\to 0$ by invoking a continuity argument. In detail, we can always choose $t_2>t_1$ so that
\begin{align}
\dot{V}[Y_\epsilon(t)](c) &\geq \dot{V}[Y(t)](c) + L \alpha(\epsilon) \notag \\[0.1cm]
&\geq \min_{\nu\in U} V[\Gamma_{g}(\nu,c,Y(t))](c) + L \alpha(\epsilon) \notag \\[0.1cm]
&\geq \min_{\nu\in U_\epsilon} V[\Gamma_{g}(\nu,c,Y_\epsilon(t))](c)
\end{align}
for a sufficiently small $\epsilon > 0$. Thus, it follows from step S1 that $Y_\epsilon$ is a RFIT for all sufficiently small $\epsilon > 0$. The final technical difficulty involves analyzing the limit behavior of the sequence 
\begin{equation*}
\begin{aligned}
\mu_{\epsilon}(t,\xi) &:= \mu^*_{t,\epsilon}(\mathcal{G}_{Y_{\epsilon}(t)}(\xi))\\[0.1cm]
\text{with}\;\mu^*_{t,\epsilon}(c) &:= \argmin_{\nu\in U_\epsilon} \tr{c}G(\mathcal{G}_{Y_\epsilon(t)}^{-1}(c))\nu \; ,
\end{aligned}
\end{equation*}
which may fail to converge  as $\epsilon \to 0$. Since $\mu_{\epsilon}(t,\xi)$ takes values in $U_{\epsilon}$ and the sets $U_{\epsilon}$ converge in the Hausdorff sense to a compact set $U$, the sequence $\mu_{\epsilon}(t,\xi)$ is bounded uniformly with respect to $\epsilon > 0$. Consequently, we can use the Bolzano-Weierstrass theorem to establish the existence of a sequence $\epsilon_1,\epsilon_2,... \in \mathbb R_{+}$ with $\lim_{i \to \infty} \epsilon_i \to 0$ such that the limit
\begin{equation*}
\mu(t,\xi) = \lim_{i \to \infty} \; \mu_{\epsilon_i}(t,\xi)
\end{equation*}
exists. By construction, $\mu(t,\xi)$ is a control law that generates the limit tube $Y$, therefore $Y$ is a RFIT.\qed

\section{Proof of Theorem~\ref{thm::elltube}}
\label{app::elltube} 

In analogy to the proof of Theorem~\ref{thm::minmaxDI}, the following proof proceeds in two steps. In the first step (S1), we establish the results under the following auxiliary assumption:
\begin{enumerate}
\setcounter{enumi}{2}
\renewcommand{\theenumi}{A\arabic{enumi}}
\item \label{ass::Qextra}The shape matrices $Q_u$ and $Q_x(t)$, $t_1\leq t\leq t_2$, are positive definite.
\end{enumerate}
In the second step (S2), we argue that the result still holds by removing this extra assumption.

\paragraph*{S1} The idea in this part of the proof is to show that the conditions~\eqref{eq::elltube_qx}--\eqref{eq::elltube_Qx} imply the min-max differential inequality \eqref{eq::minmaxDI-RHS} for $Y(t):=\mathcal E(q_x(t),Q_x(t))$. 
Using the state decomposition into nominal part~\eqref{eq::elltube_qx} and perturbed part~\eqref{eq::odedecomposition} as well as Assumption~\ref{ass::nonlinearityn}, we want to show that
\begin{equation}
\label{eq::cond1}
\begin{aligned}
 \dot{V}[\mathcal E(Q_x&(t))](c) \\
 & \geq \min_{\nu\in \mathcal{E}(R_u(t))} V[ \Gamma_{g_{\delta_{x}}}\left(  \nu,c, \mathcal{E}(Q_x(t)) \right) ](c) 
\end{aligned}
\end{equation} 
for a.e. $t\in [t_1,t_2]$ and all $c \in \mathbb R^{n_x}$ such that $\tr{c}c=1$. 
Here, the set-valued function $\Gamma_{g_{\delta_{x}}}$ is given by
\begin{align*}
&\Gamma_{g_{\delta_{x}}}(\nu,c,\mathcal{E}(Q_x(t)))\\
 &:= \left\{
 \begin{aligned} 
 &\phantom{+}A(q_x(t))\xi\\
 &+B(q_x(t))\omega_1\\
 &+G(q_x(t)+\xi)\nu\\
 &+\omega_2 
 \end{aligned} \middle| 
 \begin{aligned}
 \tr{c}\xi &= V[\mathcal{E}(Q_x(t))](c) \\
 \xi &\in \mathcal{E}(Q_x(t)) \\
 \omega_1 &\in \mathcal{E}(Q_w)\\
 \omega_2 &\in \mathcal{E}(\Omega_{n}(q_x(t),Q_x(t)))
 \end{aligned} \right\}\,. 
\end{align*} 
Moreover, the controls are optimized over $\mathcal{E}(R_u(t))$ in \eqref{eq::cond1}, since the central path $q_x(t)$ in \eqref{eq::elltube_qx} is evaluated along $u_x(t)\in\mathcal{E}(q_u,Q_u)$, instead of the center $q_u$ of $\mathcal{E}(q_{u},Q_{u})$. Recall that the construction of such an inner ellipsoid $\mathcal{E}(u_{x}(t),R_u(t))\subseteq\mathcal{E}(q_u,Q_u)$ is given by Lemma~\ref{lem::innercontrol}.

Next, consider a family of ellipsoids parameterized by the matrix valued function $Q_x:[t_1,t_2]\to\mathbb{S}_{++}^{n_x}$. 
For each $t\in[t_1,t_2]$ the Gauss map $\mathcal{G}_{\mathcal{E}_{x}(t)}:\bd\mathcal E(Q_x(t))\to \mathcal S^{n_x-1}$ 
is a diffeomorphism under Assumption~\ref{ass::Qextra}, given by:
\begin{align*}
\mathcal{G}_{\mathcal{E}_{x}(t)}(\xi) & := \frac{Q_x^{-1}(t)\xi}{\left\|Q_x^{-1}(t)\xi\right\|_2}\,, & \mathcal{G}_{\mathcal{E}_{x}(t)}^{-1}(c) & 
= \frac{Q_x(t)c}{\sqrt{\tr{c} Q_x(t) c}}\,.
\end{align*}
In particular, the right-hand side of Condition~\eqref{eq::cond1} is given by
\begin{equation*}
\begin{aligned}
&\min_{\nu\in \mathcal{E}(R_u(t))} V[ \Gamma_{g_{\delta_{x}}}\left(  \nu,c, \mathcal{E}(Q_x(t)) \right) ](c) \\
& \ = \tr{c}A(q_x(t)) \mathcal{G}_{\mathcal{E}_{x}(t)}^{-1}(c) + \min_{\nu\in \mathcal E(R_u(t))}\tr{c}G(q_x(t)+\mathcal{G}_{\mathcal{E}_{x}(t)}^{-1}(c))\nu \\
& \ \ \ + \max_{\omega_1 \in \mathcal E(Q_w)} \tr{c} B(q_x(t)) \omega_1 + \max_{\omega_2 \in \mathcal E(\Omega_{n}(q_x(t),Q_x(t)))} \tr{c} \omega_2\,.
\end{aligned}
\end{equation*}
Since, for any matrices $D\in\mathbb R^{n_x\times n_\zeta}$ and $Q_{\zeta}\in\mathbb S_{+}^{n_\zeta}$,
\begin{equation*}
\max_{\zeta}/\min_{\zeta}\left\{ \tr{c}D\zeta \middle| \zeta\in\mathcal{E}(Q_{\zeta}) \right\} =\pm \sqrt{\tr{c}DQ_{\zeta}\tr{D}c}\;,
\end{equation*}
we obtain
\begin{equation*}
\begin{aligned}
&\min_{\nu\in \mathcal{E}(R_u(t))} V[ \Gamma_{g_{\delta_{x}}}\left(  \nu,c, \mathcal{E}(Q_x(t)) \right) ](c) \\
& \ = \tr{c}A(q_x(t)) \mathcal{G}_{\mathcal{E}_{x}(t)}^{-1}(c)\\
& \ \ \ - \sqrt{\tr{c}G(q_x(t)+\mathcal{G}_{\mathcal{E}_{x}(t)}^{-1}(c))R_u(t) G(q_x(t)+\mathcal{G}_{\mathcal{E}_{x}(t)}^{-1}(c)) c}   \\
& \ \ \ + \sqrt{\tr{c} \Omega_{n}(q_x(t),Q_x(t)) c} + \sqrt{\tr{c} B(q_x(t))Q_w B(q_x(t)) c}  \, .
\end{aligned}
\end{equation*}
Using the support function of the ellipsoids $\mathcal E(Q_x(t))$,
\begin{equation*}
V[\mathcal E(Q_x(t))](c) = \sqrt{\tr{c} Q_x(t) c}\,,
\end{equation*}
we thus have that condition \eqref{eq::cond1} is equivalent to 
\begin{equation}
\label{eq::cond2}
\begin{aligned}
&\frac{1}{2}\tr{c} \dot{Q}_x(t) c \geq \tr{c} A(t) Q_{x}(t) c \\
& \quad - \Vert \tr{c} G(q_x(t)+\mathcal{G}_{\mathcal{E}_{x}(t)}^{-1}(c)) R_{u}^\frac{1}{2}(t) \Vert_2 \, \Vert Q_{x}^\frac{1}{2}(t) c \Vert_2\\
& \quad + \Vert Q_{x}^\frac{1}{2}(t)c \Vert_2 \, \Vert \Omega_{n}^\frac{1}{2}(q_x(t),Q_x(t)) c \Vert_2 \\
& \quad + \Vert Q_{x}^\frac{1}{2}(t)c \Vert_2 \, \Vert Q^\frac{1}{2}_{w}\tr{B(q_x(t))} c \Vert_2 \,, 
\end{aligned}
\end{equation}
for a.e. $t\in [t_1,t_2]$ and all $c \in \mathbb R^{n_x}$ with $\tr{c}c=1$.

At this point, we use the following identities,
\begin{align*}
\Vert C_1 y \Vert_2 \Vert C_2 y \Vert_2 = \max_S \; & \tr{y} \tr{C_1}SC_2 y \ \ \text{s.t.}\ \ S\tr{S} \preceq I \nonumber\\
= \inf_{\lambda>0} \; & \frac{1}{2 \lambda} \tr{y}\tr{C_1}C_1y + \frac{\lambda}{2} \tr{y}\tr{C_2}C_2y\,,
\end{align*}
in order to establish that \eqref{eq::cond2} holds whenever there exist real-valued functions $\lambda,\kappa:[t_1,t_2]\to\mathbb R_{++}$ and a matrix-valued function $S:[t_1,t_2]\to\mathbb{R}^{n_x\times n_u}$ with $S\tr{S}\preceq I$ such that
\begin{equation}
\label{eq::cond3}
\begin{aligned}
& \frac{1}{2}\tr{c} \dot{Q}_x(t) c \, \geq  \tr{c} A(t) Q_{x}(t) c \\
&\quad - \tr{c} Q_{x}^\frac{1}{2}(t) S(t) R^\frac{1}{2}_{u}(t) \tr{G(q_x(t)+\mathcal{G}_{\mathcal{E}_{x}(t)}^{-1}(c))} \\
&\quad + \left(\frac{1}{2\lambda(t)}+\frac{1}{2\kappa(t)}\right) \tr{c}Q_{x}(t)c\\
&\quad + \frac{\lambda(t)}{2}\tr{c} \Omega_{n}(q_x(t),Q_x(t))c \\
&\quad + \frac{\kappa(t)}{2}\tr{c} B(q_x(t)) Q_{w} \tr{B(q_x(t))}c \,, 
\end{aligned}
\end{equation}
for a.e. $t\in [t_1,t_2]$ and all $c \in \mathbb R^{n_x}$ with $\tr{c}c=1$. In particular, condition~\eqref{eq::elltube_Qx} along with Assumption~\ref{ass::nonlinearityG} ensure that
\begin{equation*}
\begin{aligned}
\dot{Q}_x&(t) \succeq A(q_x(t))Q_x(t) + Q_x(t)\tr{A(q_x(t))} \\
& + Q_x^\frac{1}{2}(t)S(t)R_{u}^\frac{1}{2}(t)\tr{G(\xi)} + G(\xi)R_{u}^\frac{1}{2}(t)\tr{S(t)}Q_x^\frac{1}{2}(t) \\
& + \left(\frac{1}{\lambda(t)} + \frac{1}{\kappa(t)} \right) Q_x(t) + \lambda(t) B(q_x(t))Q_w\tr{B(q_x(t))} \\
& + \kappa(t)\, \Omega_{n}(q_x(t),Q_x(t)) \, ,
\end{aligned}
\end{equation*}
for a.e. $t\in [t_1,t_2]$ and for all $\xi \in \mathcal E(q_x(t),Q_x(t))$, which also implies condition \eqref{eq::cond3} since $\mathcal{G}_{\mathcal{E}_{x}(t)}^{-1}(c)\in\mathcal E(Q_x(t))$.

The result that $\mathcal E(q_x(t),Q_x(t))$ describes a RFIT on $[t_1,t_2]$ follows from Theorem~\ref{thm::minmaxDI}. Moreover, a feedback control law for this tube is given by $\mu(t,\xi) = \mu^*_t(\mathcal{G}_{\mathcal{E}_{x}(t)}( \xi - q_{x}(t) ))$ with
\begin{equation}
\label{eq::ellmutstar}
\begin{aligned}
\mu^*_t(c) :=\ & \argmin_{\nu\in \mathcal{E}(u_{x}(t),R_u(t))} \tr{c}G\left(q_x(t)+\mathcal{G}^{-1}_{\mathcal{E}_{x}(t)}(c)\right)\nu\\
=\ & u_{x}(t) - \textstyle\frac{R_{u}(t)\tr{G\left(q_x(t)+\mathcal{G}^{-1}_{\mathcal{E}_{x}(t)}(c)\right)}c}{\left\Vert R^\frac{1}{2}_{u}(t)\tr{G\left(q_x(t)+\mathcal{G}^{-1}_{\mathcal{E}_{x}(t)}(c)\right)}c\right\Vert_{2}}\,,
\end{aligned}
\end{equation}
for all $c\neq 0$, i.e. for $\xi\in\bd \mathcal{E}(q_{x}(t),Q_{x}(t))$. Finally, since any control action $u(t)\in\mathcal{E}(q_{u},Q_{u})$ is valid for $\xi$ in the interior of $\mathcal{E}(q_{x}(t),Q_{x}(t))$, and since $u_x(t)$ is the natural control action for $\xi = q_{x}(t)$, we can define a feedback control associated to the ellipsoidal tube by
\begin{equation*}
\mu(t,\xi) = 
\begin{cases}
\mu^{*}_{t}(\mathcal{G}_{Y(t)}(\xi)) & \text{if }\; \xi \in \bd Y(t) \\
u_x(t) &\text{otherwise}
\end{cases}\;.
\end{equation*}

\paragraph*{S2}
In order to show that the result also holds for general positive semidefinite matrices, we can add a small regularization term $\epsilon I$ to the matrices $Q_w$, $Q_u$ and $Q_x(t)$, and then take limits as $\epsilon\to 0$ by invoking the exact same continuity argument as in Step~S2 of the proof of Theorem~\ref{thm::minmaxDI}. This process results in the feedback control law $\mu(t,\xi) = \mu^*_t(\mathcal{G}_{\mathcal{E}_{x}(t)}( \xi - q_{x}(t) ))$, with $\mu^*_t$ given by Eq.~\eqref{eq::ellmutstar}; and the Gauss map $\mathcal{G}_{\mathcal{E}_{x}(t)}$ given by
\begin{equation*}
\mathcal{G}_{\mathcal{E}_{x}(t)}(\xi) = \frac{Q^{\dagger}_{x}(t)\xi}{\left\Vert Q^{\dagger}_{x}(t)\xi \right\Vert_2} \;,
\end{equation*}
which follows from the fact that
\begin{equation*}
\lim_{\epsilon \to 0}  (Q_{x}(t)+\epsilon I)^{-1}\xi = Q^\dagger_{x}(t) \xi
\end{equation*} 
for all $\xi\in\bd\mathcal E(Q_x(t))\subseteq\operatorname{span}(Q_{x}(t))$. \qed

\section{Proof of Lemma~\ref{lem::innercontrol}}
\label{app::ellinnerapprox}

The statement of the Lemma is trivially satisfied with $\gamma(t) = 1$, and this case is thus excluded from the following considerations.
Consider the rank-1 ellipsoid
\begin{equation*}
\mathcal{E}\left(q_u,(u_x(t)-q_u)\tr{(u_x(t)-q_u)}\right)
\end{equation*}
and observe that $\mathcal{E}(u_x(t),R_u(t))\subseteq \mathcal{E}(q_u,Q_u)$ if
\begin{equation}
\label{eq::auxmink1}
\mathcal{E}\left(q_u,(u_x(t)-q_u)\tr{(u_x(t)-q_u)}\right)\oplus \mathcal{E}(R_u(t)) \subseteq \mathcal E(q_u,Q_u)\,.
\end{equation}
Using the
standard formula for the ellipsoidal bounding of the Minkowski sum of ellipsoids~\cite{Kurzhanski1993}, 
we find that~\eqref{eq::auxmink1} holds if
\begin{equation}
\label{eq::auxmink2}
Q_u = \frac{1}{\gamma(t)}(u_x(t)-q_u)\tr{(u_x(t)-q_u)} + \frac{1}{1-\gamma(t)} R_{u}(t) \;,
\end{equation}
for any $\gamma(t) \in (0,1)$. Solving Eq.~\eqref{eq::auxmink2} with respect to $R_u(t)$ yields the statement of Lemma~\ref{lem::innercontrol}.
\qed

\section{Smooth Nonlinearity Bounders for Twice-Continuously-Differentiable Functions}
\label{app::frobeniusbound}

\begin{lemma}
\label{lem::frobeniusbound}
Consider a twice-continuously-differentiable function $g:\mathbb R^{n_y}\to \mathbb R^{n_y}$, and define the remainder function $n:\mathbb{R}^{n_y}\to\mathbb{R}^{n_y}$ such that
\begin{equation*}
g(y) = g(q_y) + \frac{\partial g}{\partial y}(q_y)\delta_y + n(\delta_y)\,, 
\end{equation*}
with $\delta_y := y-q_y$. Let $D_{y}\in\mathbb K^{n_y}$ and $(q_y,Q_y)\in\mathbb{R}^{n_y}\times\mathbb{S}_+^{n_y}$ such that $\mathcal E(q_y,Q_y)\subseteq D_{y}$. Suppose that there exist constants $\bar F_1,\ldots,\bar F_{n_y} \in \mathbb R_{+}$ satisfying
\begin{equation*}
\forall y\in\mathcal D_y\,,\quad \bar F_i \geq \left\Vert \frac{\partial^{2}g_i}{\partial y^{2}}(y)\, S_i\right\Vert_{\rm F}\,, 
\end{equation*}
for certain invertible matrices $S_1,\ldots,S_{n_y}\in\mathbb R^{n_y\times n_y}$. Then, $n(\delta_y) \in \mathcal E\left( Q_{n} \right)$, for all $\delta_y\in\mathcal E(Q_y)$ with
\begin{equation*}
Q_{n} := \frac{1}{4}\operatorname{diag}\left( \bar F_i^{2}\left\Vert S_{i}^{-1}Q_{y}\right\Vert^{2}_{\rm F}\right)_{1\leq i\leq n_y}\,. 
\end{equation*}
\end{lemma}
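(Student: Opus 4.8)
The plan is to bound the remainder function $n$ componentwise, using Taylor's theorem with integral remainder for each scalar component $g_i$, and then to recognize that a componentwise bound translates directly into containment in an axis-aligned ellipsoid. First I would write, for each $i$, the second-order Taylor remainder as
\[
n_i(\delta_y) = \int_0^1 (1-s)\,\tr{\delta_y}\,\frac{\partial^2 g_i}{\partial y^2}(q_y + s\,\delta_y)\,\delta_y\,\mathrm{d}s\,,
\]
which is valid since $\mathcal{E}(q_y,Q_y)$ is convex and contained in $\mathcal{D}_y$, so that the whole segment $q_y + s\,\delta_y$ stays in the region where the Hessian bound applies.

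Next I would estimate $|n_i(\delta_y)|$. Writing $\delta_y = Q_y^{1/2} v$ with $\tr{v}v \le 1$ for $\delta_y \in \mathcal{E}(Q_y)$, the key manipulation is to insert $S_i S_i^{-1}$ so that the quadratic form becomes a Frobenius/inner-product pairing: for any symmetric $H$,
\[
\tr{\delta_y} H\, \delta_y = \operatorname{Tr}\!\big(H\, \delta_y \tr{\delta_y}\big) = \operatorname{Tr}\!\big( (H S_i)(S_i^{-1} \delta_y \tr{\delta_y})\big)\,,
\]
and then Cauchy--Schwarz for the Frobenius inner product gives $|\tr{\delta_y} H \delta_y| \le \|H S_i\|_{\rm F}\, \|S_i^{-1}\delta_y\tr{\delta_y}\|_{\rm F}$. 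Using $\|S_i^{-1}\delta_y\tr{\delta_y}\|_{\rm F} = \|S_i^{-1}\delta_y\|_2 \|\delta_y\|_2$ and then $\|S_i^{-1}\delta_y\|_2\|\delta_y\|_2 = \|S_i^{-1}Q_y^{1/2} v\|_2 \|Q_y^{1/2} v\|_2 \le \|S_i^{-1}Q_y^{1/2}\|_F\,\|Q_y^{1/2}\|_{?}$... actually the cleaner route is to bound $\|S_i^{-1}\delta_y\tr{\delta_y}\|_F = \|S_i^{-1}\delta_y\tr{\delta_y}\|_F$ by first noting $\delta_y\tr{\delta_y} \preceq Q_y$ in the Loewner order is not quite what is needed; instead I would bound $\|S_i^{-1}\delta_y\|_2\|\delta_y\|_2 \le \|S_i^{-1} Q_y S_i^{-\sf T}\|$... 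The cleanest bound: since $\delta_y\tr{\delta_y}$ is rank one with $\delta_y\tr{\delta_y}\preceq$ nothing simpler, I would instead directly write $\|S_i^{-1}\delta_y\tr{\delta_y}\|_F = \|S_i^{-1}\delta_y\|_2\,\|\delta_y\|_2$ and use that for $\delta_y\in\mathcal{E}(Q_y)$ one has $\delta_y = Q_y v'$ for some $v'$ with $\tr{v'}Q_y v'\le 1$; more robustly, use the sharp inequality $\|S_i^{-1}\delta_y\|_2\|\delta_y\|_2 \le \|S_i^{-1}Q_y\|_F$ valid for all $\delta_y$ with $\delta_y\tr{\delta_y}\preceq Q_y$, i.e.\ $\delta_y\in\mathcal{E}(Q_y)$, which follows from $\|S_i^{-1}\delta_y\tr{\delta_y}\|_F \le \|S_i^{-1}Q_y\|_F$ because the map $M\mapsto\|S_i^{-1}M\|_F$ is monotone on PSD matrices in the sense of the Loewner order applied to the relevant Gram structure. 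Combining with $\int_0^1 (1-s)\,\mathrm{d}s = \tfrac12$ yields $|n_i(\delta_y)| \le \tfrac12 \bar F_i\,\|S_i^{-1}Q_y\|_F$.

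Finally I would assemble the componentwise bounds: $|n_i(\delta_y)| \le \tfrac12 \bar F_i\|S_i^{-1}Q_y\|_F =: r_i$ for each $i$ means $n(\delta_y)$ lies in the box $\prod_i [-r_i, r_i]$, and the smallest axis-aligned ellipsoid containing that box is $\mathcal{E}(\operatorname{diag}(r_i^2)) = \mathcal{E}(Q_n)$ — wait, the box of half-widths $r_i$ is contained in $\mathcal{E}(\operatorname{diag}(n_y r_i^2))$, not $\mathcal{E}(\operatorname{diag}(r_i^2))$; the stated $Q_n = \tfrac14\operatorname{diag}(\bar F_i^2\|S_i^{-1}Q_y\|_F^2)$ has exactly $(Q_n)_{ii} = r_i^2$, so the claim must instead be that each coordinate separately satisfies $n_i^2 \le r_i^2 = (Q_n)_{ii}$, hence $\tr{n}\,Q_n^{-1}\,n = \sum_i n_i^2/r_i^2 \le$ is NOT automatically $\le 1$. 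I would therefore re-examine whether the intended containment uses a sharper simultaneous bound — indeed the Cauchy--Schwarz step can be kept vectorial: $|n_i(\delta_y)|$ is bounded using the \emph{same} $\delta_y$ for all $i$, and one shows $\sum_i n_i(\delta_y)^2/r_i^2 \le 1$ by a more careful argument exploiting that $\sum_i (\tr{\delta_y} H_i \delta_y)^2$ with the normalization cannot exceed the sum of the individual squared bounds when $\delta_y$ is fixed. The main obstacle, and the step I would spend the most care on, is precisely this last point: getting from $n_y$ individual coordinate bounds to the single ellipsoidal containment $n(\delta_y)\in\mathcal{E}(Q_n)$ with the stated (non-inflated) $Q_n$, which requires either that the extremizing $\delta_y$ for different coordinates be shown incompatible, or a direct quadratic-form argument; everything else (Taylor remainder, Frobenius Cauchy--Schwarz, the convexity needed to apply the Hessian bound along the segment) is routine.
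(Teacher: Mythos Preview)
Your plan coincides with the paper's argument: bound each component $n_i$ via Taylor's theorem, rewrite $\tr{\delta_y}H_i\delta_y=\operatorname{Tr}\!\big((H_iS_i)(S_i^{-1}\delta_y\tr{\delta_y})\big)$, and invoke the Frobenius Cauchy--Schwarz inequality together with the hypothesis $\|H_iS_i\|_{\rm F}\le\bar F_i$ to reach $|n_i(\delta_y)|\le\tfrac12\bar F_i\,\|S_i^{-1}Q_y\|_{\rm F}$. The only inessential difference is that the paper uses the Lagrange (mean-value) form of the remainder rather than your integral form.

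There is a genuine gap, and it is precisely the one you flag at the end. Coordinatewise bounds $|n_i(\delta_y)|\le a_i:=\tfrac12\bar F_i\|S_i^{-1}Q_y\|_{\rm F}$ only place $n(\delta_y)$ in the box $\prod_i[-a_i,a_i]$, which is \emph{not} contained in the ellipsoid $\mathcal E(\operatorname{diag}(a_i^2))=\mathcal E(Q_n)$. The paper's proof, however, stops at these coordinate bounds and simply asserts ``Therefore, $n$ is bounded \dots\ by an ellipsoid \dots\ with semi-axes of length $\tfrac12\bar F_i\Vert S_i^{-1}Q_y\Vert_{\rm F}$'', so it does not supply the sharper simultaneous estimate you are searching for either. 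A second, smaller issue: your Loewner-monotonicity justification for $\|S_i^{-1}\delta_y\tr{\delta_y}\|_{\rm F}\le\|S_i^{-1}Q_y\|_{\rm F}$ is incorrect --- the map $M\mapsto\|S_i^{-1}M\|_{\rm F}$ is not monotone in the Loewner order (take $S_i^{-1}=\operatorname{diag}(\epsilon^{-1},\epsilon)$, $Q_y=\operatorname{diag}(\epsilon^2,1)$, and $\delta_y=(\epsilon,1)/\sqrt2\in\mathcal E(Q_y)$: then $\|S_i^{-1}\delta_y\tr{\delta_y}\|_{\rm F}\to\tfrac12$ while $\|S_i^{-1}Q_y\|_{\rm F}=\epsilon\sqrt2\to0$). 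The paper's analogue of this step replaces $\delta_y\tr{\delta_y}$ by $Q_y$ directly inside the trace via $\delta_y\tr{\delta_y}\preceq Q_y$, which tacitly requires the Hessian $\partial^2 g_i/\partial y^2$ to be positive semidefinite.
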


\begin{proof}
From Taylor's theorem, the remainder function $n_i$ corresponding to $g_i$, for each $i=1,\ldots,n_y$, is given by 
\begin{equation*}
n_i(\delta_y) = \frac{1}{2}\tr{\delta_y}\,\frac{\partial^2g_i}{\partial y^2}(\xi_i)\,\delta_y\,,
\end{equation*}
for some $\xi_i\in\operatorname{conv}(\{ y,q_y \})$. Then, for all $\delta_y\in\mathcal E(Q_y)$, we have
\begin{align*}
n_i(\delta_y)
& = \frac{1}{2}\operatorname{Tr}\left(\frac{\partial^2g_i}{\partial y^2}(\xi_i)\, S_iS_i^{-1}\delta_y\tr{\delta_y}\right)\\
& \leq \frac{1}{2}\operatorname{Tr}\left(\frac{\partial^2g_i}{\partial y^2}(\xi_i)\, S_iS_i^{-1}Q_y\right)\\
& =\frac{1}{2}\left\Vert\frac{\partial^2g_i}{\partial y^2}(\xi_i)\, S_i \left(S_i^{-1}Q_y\right)\right\Vert_{\rm F}\\
& \leq \frac{1}{2}\left\Vert \frac{\partial^2g_i}{\partial y^2}(\xi_i)\,S_i\right\Vert_{\rm F} \left\Vert S_i^{-1} Q_y\right\Vert_{\rm F}\\
& \leq \frac{1}{2}\bar F_i\left\Vert S_i^{-1} Q_y\right\Vert_{\rm F} \,.
\end{align*}
Therefore, $n$ is bounded on $\mathcal E(Q_y)$ by an ellipsoid centered at the origin and with semi-axes of length $\frac{1}{2}\bar F_i\Vert S_i^{-1} Q_y \Vert_{\rm F}$.
\qed
\end{proof}

\end{document}